\crefname{hypothesis}{Hypothesis}{Hypotheses}
\title{Parametric resonance for enhancing the rate of metastable transition\thanks{Submitted to the editors DATE.
\funding{This work was partly supported by NSFC grant 12101484, NSF DMS-1847802.}}}
\author{Ying Chao\thanks{School of Mathematics and Statistics, Xi'an Jiaotong University, Xi'an, Shaanxi 710049,  P.R. China 
  (\email{yingchao1993@xjtu.edu.cn}).}
\and Molei Tao\thanks{School of Mathematics, Georgia Institute of Technology, Atlanta, GA 30332, USA 
  (\email{mtao@gatech.edu}) Corresponding author.}}
\theoremstyle{definition}
\begin{document}

\maketitle

\begin{abstract}
This work is devoted to quantifying how periodic perturbation can change the rate of metastable transition in stochastic mechanical systems with weak noises. A closed-form explicit expression for approximating the rate change is provided, and the corresponding transition mechanism can also be approximated. Unlike the majority of existing relevant works, these results apply to kinetic Langevin equations with high-dimensional potentials and nonlinear perturbations. They are obtained based on a higher-order Hamiltonian formalism and  perturbation analysis for the Freidlin-Wentzell action functional. This tool allowed us to show that parametric excitation at a resonant frequency can significantly enhance the rate of metastable transitions. Numerical experiments for both  low-dimensional toy models and a molecular cluster are also provided. For the latter, we show that vibrating a material appropriately can help heal its defect, and our theory provides the appropriate vibration.

\end{abstract}

\begin{keywords}
metastable transition; Freidlin-Wentzell action functional; non-autonomous rare event; parametric resonance; material defect
\end{keywords}

\begin{AMS}
 60H10; 37J50; 37J45
\end{AMS}

\section{Introduction}
\label{sec:intro}
Rare but reactive dynamical events induced by small noise underlie many physical, chemical and biological problems. Examples of such rare events include climate changes (e.g., \cite{ragone2018computation}), nucleation in phase transitions (e.g., \cite{Heymann2008Pathways}), activated chemical reactions and conformation switching of macromolecules (e.g., \cite{weinan2002string}). To explore the mechanism of rare transition in stochastic dynamical systems is a challenging task. In the limit of weak noise, Freidlin–Wentzell large deviation theory \cite{Freidlin2012, Dembo2010} provides a framework for assessing the likelihoods of those rare events. 

This paper considers a specific case of non-autonomously forced stochastic mechanical system, modeled as a second-order\footnote{We call it 2nd-order because it can be formally written as $\ddot{x}+\Gamma\dot{x}=-\nabla V(x)+\varepsilon f(x,t)+\sqrt{\mu} \Gamma^{\frac{1}{2}}\xi(t)$, which should be compared with the 1st-order system of perturbed overdamped Langevin, i.e., $\dot{x}=-\nabla V(x)+\varepsilon f(x,t)+\sqrt{\mu} \xi(t)$. Here, $\xi(t)$ is a standard white noise in $\mathbb{R}^{nd}$.} and underdamped kinetic Langevin system, perturbed by a $\tau_f$-periodic in $t$ force $f(x,t)$:
\begin{equation}\label{Langevin-Eq}
\begin{split}
&dx=vdt,\\
&dv=-\Gamma vdt-\nabla V(x)dt+\varepsilon f(x,t)dt+\sqrt{\mu} \Gamma^{\frac{1}{2}}dW.   
    \end{split}
\end{equation}
Here variables $x,v\in \mathbb{R}^{nd}$ denotes the configuration and velocity of $n$ particles in $\mathbb{R}^d$, respectively, $V: \mathbb{R}^{nd}\to\mathbb{R}$ is the potential, $\Gamma\in \mathbb{R}^{nd\times nd}$ is a symmetric positive definite damping coefficient matrix, and $W$ is an $nd$-dimensional Wiener process. 

In Eq. \cref{Langevin-Eq}, the perturbation parameter $\varepsilon$ controls the intensity of the periodic forcing and is assumed without loss of generality to be positive. We assume $\mu$ is smaller than $\varepsilon$ so that we first have large  deviation principle and then asymptotic analysis of the Maximum Likelihood Path (which will be clarified in the next paragraph). 
We also assume $\varepsilon>0$ is small enough, so that in the absence of noise, i.e. $\mu=0$, there exists at least two stable  periodic states $x_{a}^{\varepsilon}(t)$ and $x_{b}^{\varepsilon}(t)$, separated by an unstable one $x_u^{\varepsilon}(t)$, bifurcated out of two local minima and a saddle of $V(\cdot)$ in the no-forcing case (i.e., $\varepsilon=0$), for which we also assume the saddle is an attractor on the separatrix between the basins of attraction of the two minima. Note the existence of such periodic orbits is guaranteed for small enough $\varepsilon$ due to implicit function theorem (e.g., \cite{Meiss2007}).

It is known that the presence of noise 
(i.e. $\mu\neq0$) introduces a mechanism of transition between periodic solutions of the noiseless system; see e.g., \cite{Freidlin2012,Tao2018, lin2019quasi} for autonomous problems. This article considers how a time-dependent forcing $f(x,t)$ can change the transition rate, which could be understood intuitively as the likelihood of jumping from one basin of attraction to another, and this likelihood is characterized by the transition between the stable periodic states $x_{a}^{\varepsilon}(t)\to x_{b}^{\varepsilon}(t)$, which is impossible without the noise and thus termed as `metastable transition'.  
To quantify such transitions, which involve infinite loopings around $x_{a}^{\varepsilon}(t)$ and $x_{b}^{\varepsilon}(t)$ as $t\to \pm\infty$, it is important to specify the boundary conditions of the transition $x(t)$. Based on knowledge of infinite-time metastable transition in autonomous systems, it might be tempting to consider boundary conditions $\lim_{t\to -\infty} x(t)-x_a^\varepsilon(t) = 0$ and $\lim_{t\to+\infty} x(t)-x_b^\varepsilon(t)=0$, but we will actually allow an additional phase difference,  whose ramification will be detailed later on. 


More precisely, let $\|\cdot\|_B$ denote a weighted norm, $\|x\|_B=\sqrt{x^TBx}$, where $x\in\mathbb{R}^{nd}$ and $B\in \mathbb{R}^{nd\times nd}$ is a positive definite matrix. Given an autonomous problem
\begin{equation}\label{eq:LangevinNoForcing}
\begin{split}
&dx=vdt,\\
&dv=-\Gamma vdt-\nabla V(x)dt+\sqrt{\mu} \Gamma^{\frac{1}{2}}dW,
    \end{split}
\end{equation}
equipped with boundary condition $x(T_1)=x_1$ and $x(T_2)=x_2$, Freidlin-Wentzell large deviation theory gives that, as $\mu\to 0$, the probability density of having a solution $x(\cdot)$ is formally asymptotically equivalent to $\exp\{-S_{T_1, T_2}[x]/\mu\}$, where the associated action functional $S_{T_1,T_2}[x]$ is given by
\begin{equation*}
S_{T_1, T_2}[x]=\left\{
\begin{array}{cc}
 \frac{1}{2}\int_{T_1}^{T_2}\|\ddot{x}+\Gamma\dot{x}+\nabla V(x)\|^2_{\Gamma^{-1}}dt,  & {x\in \bar{C}_{x_1}^{x_2}(T_1,T_2)},  \\
     \infty, & \text{otherwise,}
\end{array} \right.
\end{equation*}
where $\bar{C}_{x_1}^{x_2}(T_1,T_2)$ denotes the space of absolutely continuous functions in $[T_1, T_2]$ that satisfy $x(T_1)=x_1$ and $x(T_2)=x_2$.

The non-autonomy of \eqref{Langevin-Eq} creates extra challenges, but it was established in \cite{Dykman1997Resonant} that
in the $\mu\to0$ limit, the transition rate (this time between metastable periodic orbits in system \cref{Langevin-Eq} instead of metastable fixed points in system \cref{eq:LangevinNoForcing}) 
can be essentially characterised by $\exp(-S^{\varepsilon}/\mu)$, where the quantity $S^{\varepsilon}$ is described as follows: 
  \begin{align}
 &S^{\varepsilon}=\inf_{x\in \bar{C}_{x_1}^{x_2}(\mathbb{R}), \lim_{t\to -\infty} x(t)-x_a^{\varepsilon}(t) = 0, 
 \lim_{t\to +\infty} x(t)-x_b^{\varepsilon}(t) = 0} S^{\varepsilon}[x(t)], \label{Energy-S}\\
 &S^{\varepsilon}[x(t)]=\frac{1}{2}\int_{-\infty}^{+\infty}\|\ddot{x}+\Gamma\dot{x}+\nabla V(x)-\varepsilon f(x,t)\|^2_{\Gamma^{-1}}dt.
 \nonumber
\end{align}
Here the minimization in \cref{Energy-S} is performed in the space of absolutely continuous functions in $\mathbb{R}$.
We remark that the usage of boundary conditions at $\pm\infty$ is reasonable as the minimum in \cref{Energy-S} is generally achieved when $T_2-T_1\to\infty$. In other words, maximum likelihood transition time between two metastable periodic states is infinite (e.g., \cite{Heymann2008}). In the most parts of this article, we will be only concerned with local minimizers of $S^{\varepsilon}[x(t)]$.  The reason is convexity is not guaranteed and global minimization might be too difficult. We call these minimizers maximum likelihood paths (MLPs) throughout this paper (they are also called
instantons in the physical literature). Since we will be considering just local minimizers, assume without loss of generality that there is only one $x_u^\varepsilon(t)$ as we will be just considering transitions through its neighborhood.

In the absence of a non-autonomous forcing ($\varepsilon=0$), hopping between metastable states of kinetic Langevin equation has been studied in detail, including the characterization of MLPs and the corresponding action value. For example, Souza and Tao \cite{Souza2018Metastable} analysed minimisers of the Freidlin–Wentzell action for the kinetic Langevin equation
with respect to various types of friction coefficients for illustrating features in kinetic Langevin metastable transitions that markedly differ from the familiar overdamped picture. The idea studied in this paper is to use specific periodic perturbations ($\varepsilon\neq0$) to facilitate metastable transitions. This idea is inspired, for example, by the investigation of periodically-perturbed Markov jump processes in chemical and 
epidemiological applications \cite{Escudero2008, Assaf2008, Billings2018}, by stochastic resonance \cite{Presilla1989, Jung1991, Gammaitoni1998, Lucarini2019}, by the use of non-gradient forcing (which can be interpreted as an irreversible component, just like how time-dependent perturbation can also be interpreted so) for changing transition rate \cite{Heymann2008Pathways}, and by many successes in controlling deterministic systems using periodic perturbations \cite{Kapitza1951, Paul2010, Rice2000, Shapiro2012, Judson1992, Assion1998, Daniel2003, Brixner2001, Levis2001, Koon2013, Tao2016Temporal, Surappa2018Analysis, Xie2019Parametric}. To quantify how periodic perturbation can change the rate of metastable transition, a key practical question then becomes how to compute the minimum of the Freidlin-Wentzell action functional \cref{Energy-S}.

Continuous and significant efforts have already been made to understand noise induced transitions or escapes in the presence of a periodic driving. Smelyanskiy \emph{et al.} \cite{Smelyanskiy1997} and Dykman \emph{et al.} \cite{Dykman2001} proved that the escape probabilities can be changed very strongly even by a comparatively weak force. Agudov \emph{et al.} \cite{Agudov2001, Dubkov2004}  studied the effect of noise-enhanced stability of periodically driven metastable states. Chen \emph{et al.} \cite{Chen2019} identified a most likely noise-induced transition path under periodic forcing in the framework of finite noise.  However, these works mainly have been focused on first-order systems, many of which may be viewed as the overdamped limits of second-order systems\footnote{We also note overdamped Langevin (without time-dependence) is a reversible Markov process, while kinetic Langevin considered here is irreversible, and its rare event quantification, even without the time-dependent perturbation, can be much more challenging; see e.g., \cite{weinan2004minimum, heymann2008geometric, zhou2008adaptive, wan2011adaptive, lindley2013iterative, grafke2015instanton, grafke2017long, forgoston2018primer, dahiya2018ordered, dahiya2018orderedB, Tao2018, Souza2018Metastable, yang2019computing, cameron2019computing, lin2019quasi, lin2020data}).}. Meanwhile, seminal results on noise activated escape rate of a second-order and under-damped dynamical system exist \cite{Dykman1997Resonant,Maier2001}, although they mostly considered only the case of a single particle and linear additive driving forcing, i.e. $f(x,t)=f(t)$. It is our goal to extend the existent approaches and study the rate of metastable transition in the multi-particle / higher-dimensional systems \cref{Langevin-Eq}, which have potential applications in, for instance, molecules dynamics (an example of healing material defeat, which is important in material sciences, will be provided in \cref{sec:experiments}). We also consider more general nonlinear forcings; of particular interest is when $f$ is a parametric perturbation, e.g. $f(x,t)=A\cos(\omega t+\theta)x$, where parameters $A$, $\omega$, $\theta$ represent the amplitude, frequency, and phase of the forcing respectively. We will show explicitly that a specific choice of $\omega$ will lead to a parametric resonant enhancement of the transition rate. We remark that the terminology `resonance' here means that the quasi-potential / transition rate peaks at specific perturbation frequencies \cite{Dykman1997Resonant,Smelyanskiy1997}, which is different from stochastic resonance \cite{Lucarini2019} whereby the noise can lead to the amplification of the input signal.

To accomplish these goals, our point of departure is a higher-order Euler-Lagrange equation \cite{Riahi1972} characterization of the MLPs associated with 2nd-order SDEs (which can be converted to a 1st-order system, however with degenerate noise). Then we utilize a linear-theory calculation inspired by \cite{Dykman1997Resonant,Maier2001,Assaf2008,Smelyanskiy1997} to approximate the minimum of Freidlin-Wentzell action \cref{Energy-S} to the first order of $\varepsilon$. Specifically, our main contributions are (i) to develop  a higher-order Hamiltonian formalism to reformulate time-dependent Freidlin-Wentzell action functional; (ii) to approximate the hopping rate between metastable periodic orbits of system \cref{Langevin-Eq} in terms of the unperturbed MLP explicitly, based on heteroclinic perturbation analysis; and (iii) to explicitly identify parametric resonant frequency in the context of metastable transition, and uncover the impact of such resonance to the metastable transition rate in systems of practical relevance. 
\par
Several facts have to be mentioned: (i) To quantify the effect of external forcing on transition rate, we need to understand how MLPs in \cref{Langevin-Eq} change under the perturbation. After casting the rare event problem in 2nd-order systems as a Hamiltonian formalism (for this formalism for 1st-order systems, see e.g., \cite{Heymann2008,Khovanov2013,grafke2017long,Chen2019}), we will convert the transition rate quantification problem to the persistence of heteroclinic connections between periodic orbits after a nonautonomous Hamiltonian perturbation (similar problems have been considered in \cite{Escudero2008,Assaf2008}, however only for 1 Degree-Of-Freedom (DOF)). Such persistence is a classical question in dynamical systems and has been answered by Melnikov \cite{Melnikov1963,Guckenheimer1983} for 1 DOF. One key issue with kinetic Langevin systems considered here is, Melnikov's approach is no longer directly applicable even in the single particle situation, because it corresponds to a Hamiltonian system with 4 variables (2 DOF). Unfortunately, Melnikov's method was devised initially to compute the distance between stable and unstable manifolds only for planar Hamiltonian vector fields. Nevertheless, the perturbed heteroclinic connection as intersections of stable/unstable manifolds \cite{Yagasaki2018} can still be investigated. An inspiring article \cite{Capinski2018} for instance extends Melnikov’s method to give a condition under which the stable/unstable manifolds intersect transversely in multidimensional setting. 
In essence, our approach is related to Melnikov's method, but at the same time a generalization as we worked out the 1st-order perturbative expansion in higher-dimensions; 
(ii) Unlike in the autonomous kinetic Langevin case considered in \cite{Souza2018Metastable}, one can no longer relate the perturbed MLPs to a 2nd-order deterministic equation due to the loss of delicate statistical mechanical structure (which is, roughly speaking, a transverse decomposition \cite{bouchet2016generalisation} and consequent detailed balance after momentum reversible); instead, 4nd-order Euler-Lagrange equation is necessary. To analyze it, we adopt a perturbation analysis for approximating the rate change along perturbed MLPs directly, which is independent of specific form of perturbed MLPs. 
(iii) There have been previous studies on linear resonance, but our investigation in parametric resonance is, to the best of our knowledge, new.
 
The paper is organized as follows.  Our general theory is in \cref{sec:main}. After reformulating the variational problem as a higher order Hamiltonian formalism in \cref{sec:2.1}, we obtain an equivalent description of Freidlin-Wentzell action in \cref{sec:2.2} and further characterize the transition rate in \cref{sec:2.3}. We then discuss the characterization of resonant frequency for parametric resonance in \cref{sec: PR}. Numerical experiments of parametric resonance are in \cref{sec:experiments},  and conclusions follow in \cref{sec:conclusions}.

\section{General theory}
(To simplify the notation, throughout the paper, we use symbols without superscript $\varepsilon$ to represent quantities in the case of the unperturbed system, i.e., $\varepsilon=0$).

\label{sec:main}
\subsection{Higher-order Lagrange and Hamiltonian formulation}
\label{sec:2.1}
To better understand the minimizer $x(t)$ (i.e., MLP) of  Freidlin-Wentzell action functional $S^{\varepsilon}[x(t)]$, we start with the Euler-Lagrange equation of the  higher-order variational principle \cite{Riahi1972} (see also \cref{sec:EL}), given by
\begin{equation} \label{EL}
\frac{\delta S^{\varepsilon}[x]}{\delta x}=\frac{\partial\mathcal{L}}{\partial x}-\frac{d}{dt}\frac{\partial\mathcal{L}}{\partial\dot{x}}+\frac{d^2}{dt^2}\frac{\partial\mathcal{L}}{\partial\ddot{x}}=0,
\end{equation}
equipped with boundary conditions
$$\lim_{t\to-\infty}x(t)-x_{a}^{\varepsilon}(t)=0,\; \lim_{t\to+\infty}x(t)-x_{b}^{\varepsilon}(t)=0,$$
where the Lagrangian $\mathcal{L}(t;x,\dot{x},\ddot{x})$ is given by 
\begin{equation}\label{L-special}
\mathcal{L}(t;x,\dot{x},\ddot{x})=\frac{1}{2}\|\ddot{x}+\Gamma\dot{x}+\nabla V(x)-\varepsilon f(x,t)\|^2_{\Gamma^{-1}}.
\end{equation}
Note that this differs from traditional Lagrangian mechanics where $\mathcal{L}$ depends only on $t$, $x$ and $\dot{x}$, and the root of this difference lies in that noise is degenerate if we rewrite the kinetic Langevin equation as a first-order system. Consequently, the Euler-Lagrange equations \cref{EL} is a system of fourth-order differential equations for variable $x(t)$.
Another remark is that although each MLP solves the Euler-Lagrange equation, its solutions are not unique; later on we will establish a family of approximate solutions indexed by a parameter $t_0$.

We now convert this high-order Lagrangian problem to the Hamiltonian picture. For this purpose, let $q_1=x$, $q_2=\dot{x}$ and introduce new variables $p_1$, $p_2$ via
\begin{equation} \label{Generalized momentum}
p_1=\frac{\partial \mathcal{L}}{\partial \dot{x}}-\frac{d}{dt}\frac{\partial \mathcal{L}}{\partial \ddot{x}}, \;\;\; p_2=\frac{\partial \mathcal{L}}{\partial \ddot{x}}
\end{equation}
respectively, which are called generalized momentum of the prescribed system \cite{Riahi1972}. Then $x$ solves Euler-Lagrange equations  \cref{EL} if and only if
\begin{equation} \label{P1}
\dot{p}_1=\frac{dp_1}{dt}=\frac{d}{dt}\frac{\partial \mathcal{L}}{\partial \dot{x}}-\frac{d^2}{dt^2}\frac{\partial \mathcal{L}}{\partial \ddot{x}}=\frac{\partial \mathcal{L}}{\partial x}=\frac{\partial \mathcal{L}}{\partial q_1}.
\end{equation}
Since our Lagrangian $\mathcal{L}(t;x,\dot{x},\ddot{x})$ satisfies the following non-degeneracy condition
$$\det(\frac{\partial^2\mathcal{L}}{\partial \ddot{x}\partial \ddot{x}})_{nd\times nd}\neq 0,$$
it follows from implicit function theorem that $\ddot{x}$ could be locally expressed as a function of $t$, $q_1$, $q_2$, $p_2$; denote it by
  \begin{displaymath}
  \ddot{x}=\mathbf{G}(t;q_1, q_2, p_2).
   \end{displaymath}
 Define the Hamiltonian 
\begin{align}
H:&=H(t,q_1, q_2, p_1, p_2)\notag\\
&=p_1\cdot\dot{x}+p_2\cdot\ddot{x}-\mathcal{L}(t;x,\dot{x},\ddot{x})\notag\\
&=p_1\cdot\dot{q}_1+p_2\cdot \mathbf{G}(t;q_1,q_2, p_2)-\mathcal{L}(t; q_1,q_2,\mathbf{G}(t;q_1, q_2,p_2)),
\end{align}
where $``\cdot"$ represents the scalar product of vectors. It follows that 
\begin{equation} \label{Calculation}
  \begin{split}
  &\frac{\partial H}{\partial q_1}=(D_{q_1}\mathbf{G}(t;q_1,q_2, p_2))^Tp_2-\frac{\partial\mathcal{L}}{\partial q_1}-(D_{q_1}\mathbf{G}(t;q_1,q_2, p_2))^T\frac{\partial\mathcal{L}}{\partial\ddot{x}}=-\frac{\partial\mathcal{L}}{\partial q_1}\\
  &\frac{\partial H}{\partial q_2}=p_1+(D_{q_2}\mathbf{G}(t;q_1,q_2,p_2))^Tp_2-\frac{\partial\mathcal{L}}{\partial q_2}-(D_{q_2}\mathbf{G}(t;q_1,q_2, p_2))^T\frac{\partial\mathcal{L}}{\partial\ddot{x}}=p_1-\frac{\partial\mathcal{L}}{\partial q_2}\\ 
  &\frac{\partial H}{\partial p_1}=\dot{q_1}\\
  &\frac{\partial H}{\partial p_2}= \mathbf{G}(t;q_1, q_2,p_2),
      \end{split}
\end{equation}
where $D_{q_1}\mathbf{G}$ (resp.  $D_{q_2}\mathbf{G}$) denotes the gradient matrix of $\mathbf{G}$ with respect to variable $q_1$ (resp. $q_2$), and superscript `$T$' refers to  transposition.
Hence, the Euler-Lagrange equations \cref{EL} transform into the Hamiltonian differential equations
\begin{equation} \label{Calculation2}
  \begin{split}
  &\dot{q_1}=\frac{\partial H}{\partial p_1},\;\;\;\;\; \dot{q_2}=\frac{\partial H}{\partial p_2},\\
  &\dot{p_1}=-\frac{\partial H}{\partial q_1},\;\;\; \dot{p_2}=-\frac{\partial H}{\partial q_2},  
    \end{split}
\end{equation}
where we use the \cref{P1} and the fact that $\dot{p}_2=p_1-\frac{\partial \mathcal{L}}{\partial \dot{x}}$ by \cref{Generalized momentum}. In fact, the above process can be reversed under the following condition 
$$\det(\frac{\partial^2 H}{\partial p_2 \partial p_2})_{nd\times nd}\neq 0.$$\par
For the specific Lagrangian in \cref{L-special}, simplifications can be made, and the corresponding Euler-Lagrange equation can be written in the equivalent Hamiltonian form as 
\begin{equation} \label{example-H}
  \begin{split}
  &\dot{q}_1=q_2,\;\;\;\;\; \;\;\;\;\;\;\;\;\;\;\;\;\;\;\dot{q}_2=\Gamma p_2-\Gamma q_2-\nabla V(q_1)+\varepsilon f(q_1,t),\\
  &\dot{p}_1=(D_{q_1}\nabla V(q_1))^Tp_2-\varepsilon(D_{q_1}f(q_1,t))^Tp_2,\;\;\; \dot{p}_2=\Gamma p_2-p_1,    \end{split}
\end{equation}
with Hamiltonian function
\begin{align}\label{H-example}
H^{\varepsilon}(t,q_1,q_2,p_1,p_2)
=&p_1^T\dot{q}_1+p_2^T\dot{q}_2-\frac{1}{2}\|\dot{q}_2+\Gamma\dot{q}_1+\nabla V(q_1)-\varepsilon f(q_1,t)\|^2_{\Gamma^{-1}} \notag\\
=&p_1^Tq_2+p_2^T[\Gamma p_2-\Gamma \dot{q_1}-\nabla V(q_1)+\epsilon f(q_1,t)]-\frac{1}{2}p_2^T\Gamma p_2\notag\\
=&\frac{1}{2}p_2^T\Gamma p_2+p_1^Tq_2-p_2^T[\Gamma q_2+\nabla V(q_1)]+\varepsilon p_2^Tf(q_1,t)\\
:=&H_0(q_1,q_2, p_1,p_2)+\varepsilon H_1(t, q_1, q_2, p_1,p_2).
\end{align}

Therefore, each MLP in \cref{Langevin-Eq} corresponds to a solution of Hamilton's equations \cref{example-H} (subject to boundary conditions). When $\varepsilon=0$, the Hamiltonian system admits at least three hyperbolic fixed points $A(q_1=x_a, q_2=p_1=p_2=0)$,  $B(q_1=x_b, q_2=p_1=p_2=0)$ and $O(q_1=x_u, q_2=p_1=p_2=0)$ of the phase space $(q_1,q_2,p_1,p_2)$, originated from the stable fixed points $(x_a,0)$, $(x_b,0)$ and saddle point $(x_u,0)$ of the \cref{Langevin-Eq}, respectively.  
Based on this higher-order Hamiltonian formalism, we now investigate how MLPs in \cref{Langevin-Eq} change under perturbation via understanding how heteroclinic connections in \cref{example-H} change under perturbation. 

Let us first consider the system \cref{Langevin-Eq} in the absence of perturbation, i.e., with $\varepsilon=0$. 
It is known that MLPs among two local minima $x_{a}$, $x_b$ of $V(x)$ correspond to the concatenation between the uphill heteroclinic orbits parametrized by $x_h^{(1)}(t)$ and downhill heteroclinic orbits $x_h^{(2)}(t)$ (e.g., \cite{Freidlin2012, Souza2019}) as long as they exist, which are described by
\begin{align}
&\ddot{x}_h^{(1)}-\Gamma\dot{x}_h^{(1)}+\nabla V(x_h^{(1)})=0,\;\;\; x_h^{(1)}(-\infty)=x_{a},\;\; x_h^{(1)}(+\infty)=x_{u},\label{uphill}\\
&\ddot{x}_h^{(2)}+\Gamma\dot{x}_h^{(2)}+\nabla V(x_h^{(2)})=0,\;\;\; x_h^{(2)}(-\infty)=x_{u},\;\; x_h^{(2)}(+\infty)=x_{b}
\label{downhill}
\end{align}
respectively, where $x_u$ is a saddle of $V(x)$ and it serves as the transition from `uphill' to `downhill'.
The uphill (resp. downhill) heteroclinic orbit will be assumed to exist in this paper (the nonexistence is considered in \cite{Souza2019} and not our focus).

To prepare for later treatments where $\varepsilon$ is no longer zero, we note that $x_h^{(1)}(t-t_0)$ and $x_h^{(2)}(t-t_0)$ are also uphill and downhill heteroclinic orbits for any fixed phase parameter $t_0$. A direct calculation shows they also satisfy \cref{EL} or \cref{example-H} in the case of $\varepsilon=0$, and thus we have the correspondence in  \cref{example-H}, i.e.,
\begin{equation}\label{Relation}
  \begin{split}
  &q_{1,h}^{(1)}(t-t_0)=x_h^{(1)}(t-t_0),\;\;\;\;\;\;\;\; q_{2,h}^{(1)}(t-t_0)=\dot{x}_h^{(1)}(t-t_0),\\
  &p_{2,h}^{(1)}(t-t_0)=2q_{2,h}^{(1)}(t-t_0),\;\;\;\;\; \dot{p}_{1,h}^{(1)}=\left(D_{q_{1,h}^{(1)}}\nabla V\left(q_{1,h}^{(1)}\right)\right)^Tp_{2,h}^{(1)},
  \end{split}
\end{equation}
and 
\begin{equation}\label{Relation2}
  \begin{split}
  &q_{1,h}^{(2)}(t-t_0)=x_h^{(2)}(t-t_0),\;\;\;\;\;\;\;\; q_{2,h}^{(2)}(t-t_0)=\dot{x}_h^{(2)}(t-t_0),\\
  &p_{2,h}^{(2)}(t-t_0)=0,\;\;\;\;\;\;\;\;\;\;\;\;\;\;\;\;\;\;\;\;\;\;\; \dot{p}_{1,h}^{(2)}(t-t_0)=0,
  \end{split}
\end{equation}
respectively. Indeed, these action-minimising trajectories  \cref{Relation} (resp. \cref{Relation2}) are heteroclinic connections among two hyperbolic fixed points $A$ (resp. $O$) and $O$ (resp. $B$) of the forceless ($\varepsilon=0$) Hamiltonian system \cref{example-H}. 
Note the motion from $A$ (resp. $O$) to $O$ (resp. $B$) is the intersection of the unstable  manifold of $A$ (resp. $O$) and the stable manifold of $O$ (resp. $B$) in their respective systems. 

Now we return to the non-autonomously perturbed Hamiltonian system, i.e. \cref{example-H} with $\varepsilon\neq0$. 
Equivalently, we have the suspended system:
\begin{equation} \label{Suspend}
  \begin{split}
  &\dot{q}_1=q_2,\;\;\;\;\; \;\;\;\;\;\;\;\;\;\;\;\;\;\;\dot{q}_2=\Gamma p_2-\Gamma q_2-\nabla V(q_1)+\varepsilon f(q_1,\theta),\\
  &\dot{p}_1=(D_{q_1}\nabla V(q_1))^Tp_2-\varepsilon(D_{q_1}f(q_1,t))^Tp_2,\;\;\; \dot{p}_2=\Gamma p_2-p_1, \\
  &\dot{\theta}=1,\end{split}
\end{equation}
where $(q_1,q_2,p_1,p_2,\theta)\in \mathbb{R}^{4nd}\times\mathcal{S}^1$ ($\mathcal{S}^1=\mathbb{R}/\tau_f$). For $\varepsilon$ sufficiently small, \cref{Suspend} possesses
a Poincar\'{e} map $\mathcal{P}_{\varepsilon}^{t_0}: \sum_{t_0}\to \sum_{t_0}$, 
where $\sum_{t_0}=\{(q_1, q_2, p_1, p_2, \theta)|\theta=t_0\in[0,\tau_f]\}\subset\mathbb{R}^{4nd}\times\mathcal{S}^1$ is the global cross section \footnote{If the orbit of every point $(q_1,q_2,p_1,p_2,t)\in\mathbb{R}^{4nd+1}$ eventually crosses an $4nd$ dimensional surface $\sum_{t_0}$ and then returns to $\sum_{t_0}$ at a later time, then $\sum_{t_0}$ is a global section \cite{Meiss2007}.} at time $t_0$ of the suspended autonomous flow. 

In the perturbed system \cref{Suspend} $\gamma_a=A\times\mathcal{S}^1$, $\gamma_b=B\times\mathcal{S}^1$ and $\gamma_u=O\times\mathcal{S}^1$, as periodic orbits of suspended system with $\varepsilon=0$, are also perturbed. We will denote the perturbed (unique) periodic orbits by $\gamma_a^{\varepsilon}$, $\gamma_b^{\varepsilon}$ and $\gamma_u^{\varepsilon}$, respectively, the first component ($q_1$ component) of which give periodic orbits $x_a^{\varepsilon}(t)$, $x_b^{\varepsilon}(t)$, $x_u^{\varepsilon}(t)$ of noiseless system \cref{Langevin-Eq}. 
For generic Hamiltonians, the existence of the perturbed periodic orbits is guaranteed, by implicit function theorem, at least for sufficiently small $\varepsilon$.
We further assume that the heteroclinic trajectory connecting the unstable manifold of $\gamma^{\varepsilon}_{a}$ (resp. $\gamma^{\varepsilon}_{u}$) and the stable manifold of $\gamma^{\varepsilon}_{u}$ (resp. $\gamma^{\varepsilon}_{b}$) 
based on $\sum_{t_0}$ in system \cref{example-H} survives after the non-autonomous forcing is switched on (i.e., $\varepsilon \neq0$) (see \cref{Assume} for more details). Let us denote the perturbed instanton connection by the pair $q_{1,h}^{(i),\varepsilon }(t;t_0), q_{2,h}^{(i),\varepsilon }(t;t_0),p_{1,h}^{(i),\varepsilon }(t;t_0), p_{2,h}^{(i),\varepsilon }(t;t_0)$, $i=1,2$. Then, by the equivalence of Hamiltonian \cref{example-H}  and Largangian Mechanics \cref{EL}, we also obtain the existence of heteroclinic connection $x_h^{(1),\varepsilon}(t;t_0)$ from $x_{a}^{\varepsilon}(t)$ to $x_{u}^{\varepsilon}(t)$, and heteroclinic orbit $x_h^{(2),\varepsilon}(t;t_0)$ from $x_{u}^{\varepsilon}(t)$ to $x_{b}^{\varepsilon}(t)$ in \cref{EL}. Note that the initial time, $t_0$, appears explicitly, since solutions of the  \cref{EL} are not invariant under arbitrary translations in time (Eq. \cref{EL} is nonautonomous  for $\varepsilon\neq 0$). 

If $\varepsilon=0$, the heteroclinic connections $x_h^{(1),\varepsilon}(t;t_0)$, $x_h^{(2),\varepsilon}(t;t_0)$ degenerate to the uphill and downhill heteroclinic orbit $x_h^{(1)}(t-t_0)$, $x_h^{(2)}(t-t_0)$, respectively. Different from the autonomous case described above (where the unperturbed action is invariant to the specific choice of $t_0$, e.g., \cite{Tao2018, Souza2019}), different $t_0$ gives different local minimum as the concatenation of $x_h^{(1),\varepsilon}(t;t_0)$ and $x_h^{(2),\varepsilon}(t;t_0)$ (for each fixed $t_0$ value) gives a critical point of the action for system \cref{Langevin-Eq}.
The reason of this difference lies in that the energy in \cref{example-H} is not conserved, and the action along the perturbed instanton now includes an integral of $H^{\varepsilon}$ over time (see \cref{sec:2.2} for the equivalence of Fredlin-Wentzell action functional and Hamiltonian action). 
Consequently, we optimize over $t_0$ to obtain an MLP and the optimal transition rate of system \cref{Langevin-Eq}. 
Thus \cref{Energy-S} can be formally rewritten as follows \cite{tao2019simply}:
\begin{equation}\label{TwoS}
 S^{\varepsilon}=\min_{t_0}\{ S^{\varepsilon}[x_h^{(1),\varepsilon}(t;t_0)]+S^{\varepsilon}[x_h^{(2),\varepsilon}(t;t_0)]\}.
\end{equation}


To compute \cref{TwoS}, we will focus on the calculation of $S^{\varepsilon}[x_h^{(1),\varepsilon}(t;t_0)]$, since 
$S^{\varepsilon}[x_h^{(2),\varepsilon}(t;t_0)]$ will be, to the first order of $\varepsilon$, $0$. This is because $x_h^{(2),\varepsilon}(t;t_0)$ is moving along the perturbed downhill heteroclinic orbit as we will show in \cref{sec:2.3} (similar result originated from autonomous kinetic Langevin has been verified in \cite{Souza2019}).
From a physical point of view, the reason for such a result is that $x_h^{(2),\varepsilon}(t;t_0)$ is a relaxation trajectory, i.e. once the system has approached the vicinity of an unstable periodic state $x_{u}^{\varepsilon}(t)$, it will eventually be attracted to another stable periodic state $x_{b}^{\varepsilon}(t)$ with a probability $\sim1/N$ (which goes into the prefactor; $N$ is the number of attraction basins whose boundaries include $x_u^\varepsilon$), without requiring extra noise. Thus, in order to quantify the metastable transition rate between $x_{a}^{\varepsilon}(t)$ and 
$x_{b}^{\varepsilon}(t)$ in this case, we convert it to an escape problem from the vicinity of the stable periodic state $x_{a}^{\varepsilon}(t)$ to that of unstable one $x_{u}^{\varepsilon}(t)$. 

\begin{remark}\label{Assume}
The perturbed system \cref{example-H} will not, in general, maintain the intersection between  
the unstable and stable manifolds of $A$ (resp. $O$) and $O$ (resp. $B$) \cite{Escudero2008}: these manifolds might intersect, preserving the existence of the heteroclinic connection, but they also might not, in which case the heteroclinic is destroyed. Interestingly, Capinski \cite{Capinski2018} proposed a  Melnikov type approach for establishing transversal intersections of stable/unstable manifolds in multidimensional setting. This result could ensure the existence of heteroclinic connections in system \cref{example-H} if appropriate conditions are imposed. Nevertheless, for simplicity we just assume $\varepsilon$ is small enough so that a heteroclinic connection in the perturbed system exists.
\end{remark}

\subsection{Reformulation of Fredlin-Wentzell action functional}
\label{sec:2.2}
As described above, to obtain the minimum of the Freidlin-Wentzell action functional, the core of this calculation is the transition from the stable periodic orbit $x_{a}^{\varepsilon}(t)$ to the unstable one $x_{u}^{\varepsilon}(t)$. And our strategy will be to calculate the action along the perturbed instanton, which is the heteroclinic connection of the perturbed, time-dependent Hamiltonian system \cref{example-H}. More concretely, we start by reformulating the action functional  $S^{\varepsilon}[\cdot]$ in \cref{Energy-S} in a form convenient in this subsection, and then conduct a derivation on correction of the action in subsequently \cref{sec:2.3}. For simplicity of exposition, we drop the superscript `$(1)$' in notation $x_h^{(1)}(t)$, $x_h^{(1).\varepsilon}(t;t_0)$. 

Let us now consider a Hamiltonian action
\begin{align}\label{H-action}
A^{\varepsilon}[\gamma]=&\int_{\gamma}p_1^Tdq_1+p_2^Tdq_2-H^{\varepsilon}dt\notag\\
=&\int_{-\infty}^{\infty}p_1^T(t)\frac{dq_1}{dt}+p_2^T(t)\frac{dq_2}{dt}-H^{\varepsilon}(t,q_1(t),q_2(t), p_1(t),p_2(t))dt,
\end{align} 
where $\gamma$ is a path in phase space. To be more precise, $\gamma:(-\infty, \infty)\to \mathbb{R}^{4nd}$ and is denoted by $\gamma=\{(q_1^T(t), q_2^T(t), p_1^T(t), p_2^T(t)),-\infty < t< \infty\}$.  
$H^{\varepsilon}$ takes the form of \cref{H-example}. We remark that variables $q_1$, $q_2$, $p_1$, $p_2$ in \cref{H-action} are mutually independent. It is well known (e.g., \cite{Meiss2007}) that the curve of stationary action \cref{H-action} is the Hamiltonian trajectory \cref{example-H}. We further have the following result.

\begin{proposition}\label{Two-action}
$S^{\varepsilon}[x]=A^{\varepsilon}[\gamma]$ along the instanton solutions of \cref{EL} or  \cref{example-H}. That is
\begin{equation}\label{LH}
S^{\varepsilon}\left[x_h^{\varepsilon}(t;t_0)\right]=A^{\varepsilon}\left[\left(q_{1,h}^{\varepsilon }(t;t_0), q_{2,h}^{\varepsilon }(t;t_0),p_{1,h}^{\varepsilon }(t;t_0), p_{2,h}^{\varepsilon }(t;t_0)\right)\right],
\end{equation} 
which is dependent of $t_0$ with $t_0\in[0, \tau_{f}]$ when $\varepsilon\neq 0$. Here $x_h^{\varepsilon}(t;t_0)=q_{1,h}^{\varepsilon }(t;t_0)$ is heteroclinic connection connecting $x_{a}^{\varepsilon}(t)$ and $x_{u}^{\varepsilon}(t)$ in system \cref{EL}, and a relation between $x_h^{\varepsilon}(t;t_0)$ and $q_{2,h}^{\varepsilon }(t;t_0)$, $p_{1,h}^{\varepsilon }(t;t_0)$, $p_{2,h}^{\varepsilon }(t;t_0)$ is given by \cref{Generalized momentum}.
\end{proposition}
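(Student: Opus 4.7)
The plan is to recognize \cref{Two-action} as essentially the Ostrogradsky Legendre-transform identity applied pointwise and then integrated in $t$, supplemented by a verification that both integrals converge under the heteroclinic boundary conditions. The $t_0$-dependence is not something that needs proving here; it is inherited from the fact that the perturbed instanton itself depends on $t_0$ (the autonomous time-translation symmetry is broken for $\varepsilon\neq 0$).

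First I would use the very definition of the Hamiltonian introduced just before \cref{example-H}, namely $H^{\varepsilon}=p_1^T\dot{q}_1+p_2^T\ddot{x}-\mathcal{L}$ with $\ddot{x}$ replaced by $\mathbf{G}(t;q_1,q_2,p_2)$. Evaluating along the perturbed instanton, the identifications $q_1=x_h^{\varepsilon}$, $q_2=\dot{x}_h^{\varepsilon}=\dot{q}_1$, and $\ddot{x}_h^{\varepsilon}=\dot{q}_2$ hold, and the generalized momenta are given by \cref{Generalized momentum}. Rearranging the Legendre relation yields the pointwise identity
\begin{equation*}
\mathcal{L}\bigl(t;x_h^{\varepsilon},\dot{x}_h^{\varepsilon},\ddot{x}_h^{\varepsilon}\bigr)=p_{1,h}^{\varepsilon\,T}\dot{q}_{1,h}^{\varepsilon}+p_{2,h}^{\varepsilon\,T}\dot{q}_{2,h}^{\varepsilon}-H^{\varepsilon}\bigl(t,q_{1,h}^{\varepsilon},q_{2,h}^{\varepsilon},p_{1,h}^{\varepsilon},p_{2,h}^{\varepsilon}\bigr).
\end{equation*}
Integrating this identity over $\mathbb{R}$ gives exactly \cref{LH}, since the left-hand integral is $S^{\varepsilon}[x_h^{\varepsilon}(\cdot;t_0)]$ by \cref{Energy-S} and the right-hand integral is $A^{\varepsilon}[\gamma]$ by \cref{H-action}.

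As a consistency check specific to the Lagrangian \cref{L-special}, I would compute $p_2=\Gamma^{-1}(\ddot{x}+\Gamma\dot{x}+\nabla V(x)-\varepsilon f(x,t))$ so that $\mathcal{L}=\tfrac{1}{2}p_2^{T}\Gamma p_2$ along the instanton, and then substitute the explicit $H^{\varepsilon}$ of \cref{H-example} into $p_1^{T}\dot{q}_1+p_2^{T}\dot{q}_2-H^{\varepsilon}$, using $\dot{q}_2=\Gamma p_2-\Gamma q_2-\nabla V(q_1)+\varepsilon f(q_1,t)$; all $\varepsilon$-dependent and cross-terms cancel and one recovers $\tfrac{1}{2}p_2^{T}\Gamma p_2$, matching $\mathcal{L}$.

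The only real concern is integrability at $\pm\infty$, which I view as the main (but mild) obstacle. The endpoints $x_a^{\varepsilon}(t)$ and $x_u^{\varepsilon}(t)$ are periodic orbits of the noiseless dynamics, so the residual $\ddot{x}+\Gamma\dot{x}+\nabla V(x)-\varepsilon f(x,t)$ vanishes on them; consequently both $p_2$ and the Lagrangian density tend to zero as $t\to\pm\infty$. The rates are governed by the spectra of the linearizations of the suspended Hamiltonian flow at the hyperbolic periodic orbits $\gamma_a^{\varepsilon}$ and $\gamma_u^{\varepsilon}$ (whose existence for small $\varepsilon$ was already invoked in \cref{sec:2.1}), so the decay is exponential and both $S^{\varepsilon}$ and $A^{\varepsilon}$ are absolutely convergent; this justifies termwise integration of the pointwise identity. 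The same argument applies to $x_h^{(2),\varepsilon}$ with endpoints $x_u^{\varepsilon}$ and $x_b^{\varepsilon}$, although here we state the result for the uphill branch only, as in the proposition.
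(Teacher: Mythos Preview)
Your proposal is correct and follows essentially the same approach as the paper, which simply states that \cref{Two-action} ``holds as a result of equivalence of Hamiltonian and Lagrangian Mechanics under Legendre condition'' and refers to \cite{Meiss2007} for details. You have in fact supplied considerably more than the paper does: the pointwise Legendre identity, the explicit consistency check for \cref{L-special}, and the exponential-decay argument for integrability at $\pm\infty$ are all beyond what the paper records.
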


\cref{Two-action} holds as a result of equivalence of Hamiltonian and Lagrangian Mechanics under Legendre condition. For more details on proof of \cref{Two-action}, the reader is referred to \cite{Meiss2007}.

Served for the next subsection, we further calculate the minimum of the Freidlin-Wentzell action functional for system \cref{Langevin-Eq} in the absence of $\varepsilon$. In view of \cref{H-action}, \cref{H-example}, \cref{uphill}, we get
\begin{equation*}
\begin{array}{lll}
A_0&=&\int_{-\infty}^{\infty}[p_{1,h}^T\dot{q}_{1,h}+p_{2,h}^T\dot{q}_{2,h}-H_0(q_{1,h},q_{2,h},p_{1,h},p_{2,h})]dt\\
&=&\int_{-\infty}^{\infty}p_{1,h}^Tq_{2,h}+2q_{2,h}^T\dot{q}_{2,h}-p_{1,h}^Tq_{2,h}+2\dot{q}_{1,h}^T\nabla V(q_{1,h})dt\\
&=&2[V(x_{u})-V(x_{a})],
\end{array}
\end{equation*}
\begin{equation*}
\begin{array}{lll}
S_0=S[x_h]&=&\frac{1}{2}\int_{-\infty}^{\infty}\|\ddot{x}_h+\Gamma\dot{x}_h+\nabla V(x_h)\|^2_{\Gamma^{-1}}dt\\
&=&\frac{1}{2}\int_{-\infty}^{\infty}\|\ddot{x}_h-\Gamma\dot{x}_h+\nabla V(x_h)\|^2_{\Gamma^{-1}}+4\dot{x}_{h}^{T}(\ddot{x}_h+\nabla V(x_h))dt\\
&=&2[V(x_{u})-V(x_{a})],
\end{array}
\end{equation*}
if we choose homogeneous velocity boundary conditions, $\dot{x}_h(-\infty)=0$, $\dot{x}_h(-\infty)=0$,
where $x_h(t)$ is uphill heteroclinic orbit given in \cref{uphill}. Again we get $S_0=A_0.$

\begin{remark}\label{xutoxb}
\cref{Two-action} also holds for the case of $x_u^{\varepsilon}$(t) to $x_b^{\varepsilon}(t)$ transition, with the corresponding value of action $S_0$ being $0$ \cite{Souza2019} when $\varepsilon=0$ (Because downhill heteroclinic orbit $x_h^{(2)}(t)$ by definition \cref{downhill} is the zero of $\frac{1}{2}\int_{-\infty}^{+\infty}\|\ddot{x}+\Gamma\dot{x}+\nabla V(x)\|^2_{\Gamma^{-1}}dt$). 
\end{remark}

\subsection{Relating the minimizers of $S^{\varepsilon}$ and $S$}
\label{sec:2.3}
In the following, we will use the reformulation \cref{LH} to study the relationship between $S^{\varepsilon}[\cdot]$ and $S[\cdot]$. To do so, it is equivalent to deal with the relationship between $A^{\varepsilon}[\cdot]$ and $A[\cdot]$. 

In light of \cref{Two-action}, we provide a linear-theory calculation of the action $S^{\varepsilon}$ inspired by \cite{Assaf2008} and approximate the rate of the metastable transition. 
Assume $\varepsilon\ll 1$ so that the term $H_1(t,q_1,q_2,p_1,p_2)$ in \cref{H-example} can be treated perturbatively. Let us expand the perturbed instanton of $H^{\varepsilon}(t,q_{1,h}^{\varepsilon}(t;t_0),q_{2,h}^{\varepsilon}(t;t_0), p_{1,h}^{\varepsilon}(t;t_0),p_{2,h}^{\varepsilon}(t;t_0))$ to the first order in $\varepsilon$:
\begin{equation} \label{Perturbed-instanton}
  \begin{split}
  &q_{1,h}^{\varepsilon}(t;t_0)=q_{1,h}(t-t_0)+\varepsilon Q_{1,h}(t;t_0)+\mathcal{O}(\varepsilon^2),\\
  &q_{2,h}^{\varepsilon}(t;t_0)=q_{2,h}(t-t_0)+\varepsilon Q_{2,h}(t;t_0)+\mathcal{O}(\varepsilon^2),\\ 
  &p_{1,h}^{\varepsilon}(t;t_0)=p_{1,h}(t-t_0)+\varepsilon P_{1,h}(t;t_0)+\mathcal{O}(\varepsilon^2),\\
  &p_{2,h}^{\varepsilon}(t;t_0)=p_{2,h}(t-t_0)+\varepsilon P_{2,h}(t;t_0)+\mathcal{O}(\varepsilon^2),
   \end{split}
\end{equation}
where $q_{1,h}(t-t_0)=x_h(t-t_0)$ satisfies uphill dynamics \cref{uphill}, and it's combined with $q_{2,h}(t-t_0)$, $p_{1,h}(t-t_0)$, $p_{2,h}(t-t_0)$ to stand for the (known) instanton solution of \cref{example-H} in the absence of $\varepsilon$.
To calculate the action $A^{\varepsilon}[\cdot]$ given in \cref{H-action}, we expand the integrand in $\varepsilon$ by a Taylor theorem and obtain, in the first order,
\begin{align*}
(&p_{1,h}+\varepsilon P_{1,h})^T(\dot{q}_{1,h}+\varepsilon \dot{Q}_{1,h})+(p_{2,h}+\varepsilon P_{2,h})^T(\dot{q}_{2,h}+\varepsilon \dot{Q}_{2,h})\notag\\
&-H_0(q_{1,h},q_{2,h},p_{1,h},p_{2,h})-\varepsilon Q_{1,h}^T\frac{\partial H_0}{\partial q_{1,h}}-\varepsilon Q_{2,h}^T\frac{\partial H_0}{\partial q_{2,h}}-\varepsilon P_{1,h}^T\frac{\partial H_0}{\partial p_{1,h}}-\varepsilon P_{2,h}^T\frac{\partial H_0}{\partial p_{2,h}}\notag\\
&-\varepsilon H_1(q_{1,h},q_{2,h},p_{1,h},p_{2,h})\notag\\
\simeq&p_{1,h}^T\dot{q}_{1,h}+p_{2,h}^T\dot{q}_{2,h}-H_0(q_{1,h},q_{2,h},p_{1,h},p_{2,h})+\varepsilon p_{1,h}^T\dot{Q}_{1,h}+\varepsilon P_{1,h}^T\dot{q}_{1,h}+\varepsilon p_{2,h}^T\dot{Q}_{2,h}\notag\\
&+\varepsilon P_{2,h}^T\dot{q}_{2,h}+\varepsilon Q_{1,h}^T\dot{p}_{1,h}+\varepsilon Q_{2,h}^T\dot{p}_{2,h}-\varepsilon P_{1,h}^T\dot{q}_{1,h}-\varepsilon P_{2,h}^T\dot{q}_{2,h}\notag\\
&-\varepsilon H_1(q_{1,h},q_{2,h},p_{1,h},p_{2,h})\notag\\
=&p_{1,h}^T\dot{q}_{1,h}+p_{2,h}^T\dot{q}_{2,h}-H_0(q_{1,h},q_{2,h},p_{1,h},p_{2,h})+\varepsilon p_{1,h}^T\dot{Q}_{1,h}+\varepsilon Q_{1,h}^T\dot{p}_{1,h}+\varepsilon p_{2,h}^T\dot{Q}_{2,h}\notag\\
&+\varepsilon Q_{2,h}^T\dot{p}_{2,h}-\varepsilon H_1(q_{1,h},q_{2,h},p_{1,h},p_{2,h}).
\end{align*}
After the integration, the first three terms yield the unperturbed action $S_0$. And the fourth term and sixth term cancel out with the fifth term and seventh term, respectively, via integration by parts. The result is $S^{\varepsilon}(t_0)=S_0+\varepsilon\delta S(t_0)$, where
\begin{align}
\delta S(t_0)&=-\int_{-\infty}^{+\infty}H_1(t, q_{1,h}(t-t_0),q_{2,h}(t-t_0),p_{1,h}(t-t_0),p_{2,h}(t-t_0)dt\notag\\
&=-\int_{-\infty}^{+\infty}p_{2,h}(t-t_0)^Tf(q_{1,h}(t-t_0),t)dt\label{deltaaction}\\
&=-2\int_{-\infty}^{+\infty}\dot{x}_{h}^T(t-t_0)f(x_{h}(t-t_0),t)dt.\label{action-R}
\end{align}
The last step is based on relation \cref{Relation}. To find the optimal 1st-order correction to the minimum action's value, we have to minimize $S^{\varepsilon}(t_0)$ with respect to $t_0$, which thus yields the equation for optimal $t_0$:
\begin{equation*}
 \int_{-\infty}^{+\infty}\left[\dot{x}_{h}^T(t-t_0)\nabla f(x_{h}(t-t_0),t)\dot{x}_{h}(t-t_0)+\ddot{x}^T_{h}(t-t_0)f(x_{h}(t-t_0),t)\right]dt=0.   
\end{equation*}
We now summarize the above results in a concise form as follows:
\begin{theorem}\label{action-change}
Consider non-autonomous kinetic Langevin system \cref{Langevin-Eq}. Assume a heteroclinic connection from $x_{a}$ to $x_{u}$ exists in the noiseless ($\mu=0$) and forceless ($\varepsilon=0$) backward in time system, and $\varepsilon$ is small enough such that a heteroclinic connection from $x_{a}^\varepsilon(t)$ to $x_{u}^\varepsilon(t)$ exists in Euler-Lagrange equation \cref{EL}. Then the escape rate from stable periodic orbit $x_{a}^{\varepsilon}(t)$ to unstable (hyperbolic) periodic orbit $x_u^{\varepsilon}(t)$ is asymptotically equivalent to $\exp(-S^{\varepsilon}/\mu)$, where $S^{\varepsilon}=2[V(x_{u})-V(x_{a})]+\varepsilon \delta S_e + \mathcal{O}(\varepsilon^2)$, and $\delta S_e$ characterizes the leading order effect of external driving on metastable transition. $\delta S_e$ is given by
\begin{equation}\label{S}
\delta S_e=\min_{t_0}\delta S(t_0),\;\;\; \delta S(t_0)=-2\int_{-\infty}^{+\infty}\dot{x}_{h}^T(t-t_0)f(x_h(t-t_0),t)dt, 
\end{equation}
where $x_h(t)$ satisfies equation 
\begin{equation}\label{uphill2}
\ddot{x}_{h}-\Gamma\dot{x}_{h}+\nabla V(x_{h})=0, \;\;\; x_{h}(-\infty)=x_{a},\;\; x_{h}(+\infty)=x_{u}.
\end{equation}
Here, $x_{a}$, $x_u$ are local minimum and saddle point of potential $V(x)$, respectively. 
\end{theorem}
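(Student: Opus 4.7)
The plan is to combine the decomposition \cref{TwoS} with \cref{Two-action} and a first-order perturbation expansion of the Hamiltonian instanton. First I would split the action along any candidate transition into its uphill ($x_a^\varepsilon(t)\to x_u^\varepsilon(t)$) and downhill ($x_u^\varepsilon(t)\to x_b^\varepsilon(t)$) segments and argue, as in \cref{xutoxb}, that the downhill contribution is $O(\varepsilon^2)$. Indeed the unperturbed downhill heteroclinic \cref{downhill} is a zero of the nonnegative Freidlin--Wentzell functional, so its first variation with respect to $\varepsilon$ must vanish and only an $O(\varepsilon^2)$ term survives. This reduces the theorem to expanding $S^\varepsilon[x_h^{(1),\varepsilon}(\cdot;t_0)]$ to first order in $\varepsilon$ and then minimizing over $t_0$.

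Second, I would use \cref{Two-action} to trade the Lagrangian functional $S^\varepsilon$ for the Hamiltonian action $A^\varepsilon$ from \cref{H-action} along the perturbed heteroclinic of \cref{example-H}. The unperturbed value follows from direct integration using the identity $p_{2,h}=2q_{2,h}=2\dot x_h$ from \cref{Relation}: the $p_{1,h}^T q_{2,h}$ terms cancel and the remaining integrand rearranges to $2\dot x_h^T\nabla V(x_h)$, which integrates to $2[V(x_u)-V(x_a)]$; hence $S_0 = A_0 = 2[V(x_u)-V(x_a)]$.

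Third, I would substitute the expansion \cref{Perturbed-instanton} into $A^\varepsilon$ and Taylor expand $H^\varepsilon=H_0+\varepsilon H_1$ to first order in $\varepsilon$. Applying Hamilton's equations $\dot q_{i,h}=\partial H_0/\partial p_{i,h}$, $\dot p_{i,h}=-\partial H_0/\partial q_{i,h}$ for the unperturbed instanton, the linear-in-$\varepsilon$ terms split cleanly: the contributions $\pm P_{i,h}^T\dot q_{i,h}$ cancel pointwise, while $p_{i,h}^T\dot Q_{i,h}+Q_{i,h}^T\dot p_{i,h}=\tfrac{d}{dt}(p_{i,h}^T Q_{i,h})$ integrates to boundary contributions that vanish because $(q_{2,h},p_{1,h},p_{2,h})\to 0$ exponentially as $t\to\pm\infty$. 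Only the explicit perturbation term then remains, giving
\[
\delta S(t_0) = -\int_{-\infty}^{+\infty} H_1 \,dt = -\int_{-\infty}^{+\infty} p_{2,h}(t-t_0)^T f(q_{1,h}(t-t_0),t)\,dt,
\]
and substituting $p_{2,h}=2\dot x_h$ produces exactly \cref{S}. Minimizing over $t_0\in[0,\tau_f]$ yields $\delta S_e$, and combining with $S_0$ completes the expansion; the rate asymptotics $\exp(-S^\varepsilon/\mu)$ is then the Freidlin--Wentzell principle already quoted after \cref{Energy-S}.

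The step I expect to require the most care is the vanishing of the boundary terms produced by the integration by parts. While the unperturbed instanton decays exponentially at the hyperbolic fixed points $A$ and $O$ of \cref{example-H}, one must also control the decay of the perturbations $(Q_{i,h},P_{i,h})$ at $\pm\infty$. This follows from the persistence hypothesis of \cref{Assume} together with standard stable/unstable manifold theory applied to the higher-order Hamiltonian flow, but it is the point at which the otherwise formal first-order calculation must be justified rigorously.
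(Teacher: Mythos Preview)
Your proposal is correct and follows essentially the same route as the paper: pass to the Hamiltonian action via \cref{Two-action}, expand the perturbed instanton \cref{Perturbed-instanton} to first order, use Hamilton's equations for the unperturbed orbit to collapse the $O(\varepsilon)$ terms to total derivatives plus $-\int H_1\,dt$, and then substitute $p_{2,h}=2\dot x_h$ from \cref{Relation}. Your explicit attention to the boundary terms from integration by parts is a welcome addition that the paper leaves implicit, and note that the downhill discussion in your first paragraph (while correct, and in fact a cleaner argument than the paper's use of $p_{2,h}^{(2)}=0$ from \cref{Relation2}) is strictly needed only for \cref{Thm2}, since \cref{action-change} itself concerns only the uphill escape.
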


\begin{remark}\label{R1}
\cref{action-change} is consistent with the results of \cite{Dykman1997Resonant}  which considered the $n=d=1$, $\Gamma=\gamma$ and $f(x,t)=f(t)$ case. 
Different from \cite{Dykman1997Resonant}, our method is also applicable to high-dimensional Langevin systems with general (time-dependent, nonlinear) perturbation, while \cite{Dykman1997Resonant} focused on $1$-dimensional Langevin equations with additive periodic perturbation based on the idea of path integral.
\end{remark}

\begin{remark}\label{R2}
For the general case where $\Gamma$ depends on position and velocity, i.e. $\Gamma(x,\dot{x})
$, \cref{action-change} still holds with slight modification i.e. replacing $\Gamma$ by $\Gamma(x,\dot{x})$.
\end{remark}

A similar procedure can be used to understand the $x_{u}^{\varepsilon}(t)$ to $x_{b}^{\varepsilon}(t)$  transition, and $\delta S(t_0)$ given in \cref{deltaaction} will vanish in this case due to 
Eq. \cref{Relation2}. Together with 
\cref{xutoxb}, we verify that $S^{\varepsilon}[x_h^{(2),\varepsilon}(t;t_0)]$ in \cref{TwoS} is zero, to the first order of $\varepsilon$. Consequently,  combining with \cref{action-change}, a 
 natural result on metastable transition rate is that:
 
 \begin{theorem}\label{Thm2}
Under the same conditions as stated in \cref{action-change}, further assume that $x_u$ is the only attractor on the separatrix between the basins of attraction of $x_a$ and $x_b$, a heteroclinic connection from $x_{u}$ to $x_{b}$ exists in the noiseless ($\mu=0$) and forceless ($\varepsilon=0$) system, and $\varepsilon$ is small enough such that a heteroclinic connection from $x_{u}^\varepsilon(t)$ to $x_{b}^\varepsilon(t)$ exists in Euler-Lagrange \cref{EL}. Then the transition rate from stable periodic orbit $x_{a}^{\varepsilon}(t)$ to another stable periodic orbit $x_b^{\varepsilon}(t)$ is asymptotically equivalent to $\exp(-S^{\varepsilon}/\mu)$, where $S^{\varepsilon}=2[V(x_{u})-V(x_{a})]+\varepsilon \delta S_e + \mathcal{O}(\varepsilon^2)$, and $\delta S_e$ is described in \cref{action-change}.
 \end{theorem}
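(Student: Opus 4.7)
The plan is to reduce Theorem 2 to Theorem \ref{action-change} via the decomposition \cref{TwoS} and to show that the downhill segment contributes nothing at leading order in $\varepsilon$. Concretely, since by hypothesis $x_u$ is the only attractor on the separatrix between the basins of $x_a$ and $x_b$, any path making the metastable transition $x_a^{\varepsilon}(t)\to x_b^{\varepsilon}(t)$ must pass through a neighborhood of the perturbed saddle orbit $x_u^{\varepsilon}(t)$; this allows me to legitimately split the minimization into the uphill piece $x_h^{(1),\varepsilon}(t;t_0)$ and the downhill piece $x_h^{(2),\varepsilon}(t;t_0)$ as in \cref{TwoS}, sharing a common phase parameter $t_0$.

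For the uphill piece, Theorem \ref{action-change} gives directly
\begin{equation*}
S^{\varepsilon}[x_h^{(1),\varepsilon}(t;t_0)]=2[V(x_u)-V(x_a)]+\varepsilon\,\delta S^{(1)}(t_0)+\mathcal{O}(\varepsilon^2),
\end{equation*}
with $\delta S^{(1)}(t_0)$ the quantity $\delta S(t_0)$ of \cref{S}. For the downhill piece I would reuse the Hamiltonian reformulation of \cref{sec:2.2} and the perturbative expansion \cref{Perturbed-instanton} of \cref{sec:2.3}, applied this time to the unperturbed downhill instanton. By Remark \ref{xutoxb} the zeroth-order value is $0$, and the crucial observation is Eq. \cref{Relation2}: the unperturbed downhill trajectory satisfies $p_{2,h}^{(2)}(t-t_0)\equiv 0$. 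Plugging this into \cref{deltaaction} yields
\begin{equation*}
\delta S^{(2)}(t_0)=-\int_{-\infty}^{+\infty}p_{2,h}^{(2)}(t-t_0)^{T}f(q_{1,h}^{(2)}(t-t_0),t)\,dt=0
\end{equation*}
identically in $t_0$. Hence $S^{\varepsilon}[x_h^{(2),\varepsilon}(t;t_0)]=\mathcal{O}(\varepsilon^2)$ uniformly in $t_0$. Minimizing the sum over $t_0$ then gives
\begin{equation*}
S^{\varepsilon}=2[V(x_u)-V(x_a)]+\varepsilon\,\delta S_e+\mathcal{O}(\varepsilon^2),
\end{equation*}
with $\delta S_e=\min_{t_0}\delta S^{(1)}(t_0)$ as in \cref{action-change}.

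It remains to translate this asymptotic expression for $S^{\varepsilon}$ into the statement about the rate. The rate formula $\sim\exp(-S^{\varepsilon}/\mu)$ for non-autonomously forced systems is the large-deviation result recalled in \cref{Energy-S} (due to Dykman \emph{et al.}, extended to the kinetic Langevin setting by the Hamiltonian formalism of \cref{sec:2.1}); applying it with the value of $S^{\varepsilon}$ just obtained finishes the argument. The one-way relaxation from $x_u^{\varepsilon}(t)$ to $x_b^{\varepsilon}(t)$, which is essentially deterministic once the saddle is crossed, only affects the sub-exponential prefactor (via the $1/N$ branching probability mentioned in \cref{sec:2.1}) and so does not enter the exponent.

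The main obstacle I anticipate is the clean justification of the splitting \cref{TwoS} itself: strictly speaking, one must argue that the infimum in \cref{Energy-S} is attained by a concatenation at $x_u^{\varepsilon}$, which requires the \emph{only-attractor-on-the-separatrix} hypothesis together with some care about exchanging $\min_{t_0}$ with the action decomposition and controlling the $\mathcal{O}(\varepsilon^2)$ error uniformly in $t_0\in[0,\tau_f]$. A secondary, technical point is ensuring that the perturbative expansion \cref{Perturbed-instanton} is valid on the whole real line despite the non-compact time domain; this uses the hyperbolicity of $\gamma_a^{\varepsilon}$, $\gamma_u^{\varepsilon}$, $\gamma_b^{\varepsilon}$ and the assumed persistence of the heteroclinic connections (Remark \ref{Assume}), but is otherwise a standard estimate.
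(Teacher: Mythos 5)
Your proposal is correct and follows essentially the same route as the paper: split via the decomposition \cref{TwoS}, invoke \cref{action-change} for the uphill segment, and use Eq.~\cref{Relation2} (namely $p_{2,h}^{(2)}\equiv 0$) together with \cref{xutoxb} to show the downhill contribution is $\mathcal{O}(\varepsilon^2)$. The technical caveats you flag at the end (justifying the concatenation at $x_u^\varepsilon$ and the uniformity of the error in $t_0$) are left implicit in the paper as well, so you have not missed anything the authors addressed.
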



\begin{example}\label{R3}

Let us consider two special forms of forcing $f(x,t)$, the first being the linear forcing already considered in the literature, and the second being a parametric forcing (see e.g., \cite{Paul2010, Koon2013, Tao2016Temporal, Surappa2018Analysis, Xie2019Parametric} for existing applications to deterministic systems). They will be used later to demonstrate the resonant enhancement of transition rate. For a simple illustration, each component of $f(x,t)$ is chosen to be of the same type (although one can treat arbitrary components of general forcing $f(x,t)$ simply by substituting it into the expression \cref{S} in \cref{action-change}, and calculate integral $\delta S(t_0)$ in \cref{S}).
\par
\paragraph{(i)} For a sinusoidal field $(f(t))_j=A_j\cos(\omega_j t+\theta_j),
j=1,...,nd$,
where parameters $A_j$, $\omega_j$, $\theta_j$ represent the amplitude, frequency, phase of $j$th component of forcing respectively, the correction  $\delta S_e$ becomes 
\begin{equation*}
 \delta S_e=-2\min_{t_0}\delta S(t_0)=-2\min_{t_0}\left\{\sum_{j=1}^{nd} \cos(\omega_jt_0+\theta_j+\phi_j)|\int_{-\infty}^\infty A_j\dot{x}_h^j(t) e^{i\omega_j t} dt|\right\}. 
\end{equation*}
Here, $\dot{x}_{h}^{j}(t)$ denotes the $j$th component of $x_h(t)$, the calculation of $\delta S(t_0)$ is as follows:
{
\small
\begin{align*}
&\delta S(t_0)\notag\\
=&-2\int_{-\infty}^{+\infty}[\sum_{j=1}^{nd}A_j\dot{x}_{h}^{j}(t)\cos(\omega_j (t+t_0)+\theta_j)]dt\notag\\
=&\sum_{j=1}^{nd}\left\{-e^{i(\omega_j t_0+\theta_j)}\int_{-\infty}^{+\infty}A_j\dot{x}_{h}^j(t)e^{i\omega_j t}dt+e^{-i(\omega_j t_0+\theta_j)}\int_{-\infty}^{+\infty}A_j\dot{x}_{h}^j(t)e^{-i\omega_j t}dt\right\}\notag\\
=&-2\sum_{j=1}^{nd}[\cos(\omega_j t_0+\theta_j)\Re(\int_{-\infty}^{+\infty}A_j\dot{x}_{h}^j(t)e^{i\omega_j t}dt)-\sin(\omega_j t_0+\theta_j)\Im (A_j\int_{-\infty}^{+\infty}\dot{x}_{h}^j(t)e^{i\omega_j t}dt)]\notag\\
=&-2\sum_{j=1}^{nd}\cos(\omega_j t_0+\theta_j+\phi_j)\left|A_j\int_{-\infty}^{\infty}\dot{x}_{h}^j(t)e^{i\omega_j t}dt\right|, 
\end{align*}
}
where $\sin\phi_j=\frac{\Im\big{(} \sum_{j=1}^{nd}\int_{-\infty}^{\infty}\dot{x}_{h}^j(t)e^{i\omega_j t}dt\big{)}}{|\sum_{j=1}^{nd}\int_{-\infty}^{\infty}\dot{x}_{h}^j(t)e^{i\omega_j t}dt|}.$ Note that for a special homogeneous case where $\omega_j=\omega$, $\theta_j=\theta$ for $j=1,...,nd$, the optimal $t_0$ can be determined explicitly and thus simplification occurs. Specifically, $\delta S_e=-2\left|\sum_{j=1}^{nd}A_j\int_{-\infty}^{\infty}\dot{x}_{h}^{j}(t)e^{i\omega t}dt\right|.$ 
This is because
\begin{align*}
\delta S(t_0)=&-2\int_{-\infty}^{+\infty}[\sum_{j=1}^{nd}A_j\dot{x}_{h}^{j}(t)]\cos(\omega (t+t_0)+\theta)dt\notag\\
=&-e^{i(\omega t_0+\theta)}\int_{-\infty}^{+\infty}\sum_{j=1}^{nd}A_j\dot{x}_{h}^j(t)e^{i\omega t}dt+e^{-i(\omega t_0+\theta)}\int_{-\infty}^{+\infty}\sum_{j=1}^{nd}A_j\dot{x}_{h}^j(t)e^{-i\omega t}dt\notag\\
=&-2\cos(\omega t_0+\theta+\phi)\left|\sum_{j=1}^{nd}A_j\int_{-\infty}^{\infty}\dot{x}_{h}^j(t)e^{i\omega t}dt\right|,
\end{align*}
where $\sin\phi=\frac{\Im\big{(} \sum_{j=1}^{nd}\int_{-\infty}^{\infty}\dot{x}_{h}^j(t)e^{i\omega t}dt\big{)}}{|\sum_{j=1}^{nd}\int_{-\infty}^{\infty}\dot{x}_{h}^j(t)e^{i\omega t}dt|}.$ Thus $\delta S_e=-2|\sum_{j=1}^{nd}A_j\int_{-\infty}^{\infty}\dot{x}_{h}^j(t)e^{i\omega t}dt|.$
We see that the initial phase $\theta$ 
doesn't affect $\delta S_e$. 

\par
\paragraph{(ii)} For forcing in the form of $(f(x,t))_j=A_j\cos(\omega_j t+\theta_j)x_j, j=1,...,nd$,
the corrections $\delta S_e$ takes form of 
\begin{equation*}
 \delta S_e=-2\min_{t_0}\delta S(t_0)=-2\min_{t_0}\{\sum_{j=1}^{nd} \cos(\omega_jt_0+\theta_j+\phi_j)|\int_{-\infty}^\infty A_j\dot{x}_h^j(t)x_h^j(t) e^{i\omega_j t} dt|\},   
\end{equation*}
where $\sin\phi_j=\frac{\Im\big{(} \sum_{j=1}^{nd}\int_{-\infty}^{\infty}\dot{x}_{h}^j(t)x_h^j(t) e^{i\omega_j t}dt\big{)}}{|\sum_{j=1}^{nd}\int_{-\infty}^{\infty}\dot{x}_{h}^j(t)x_h^j(t)e^{i\omega_j t}dt|}.$
 Again, $\delta S_e$ takes a more simple form, when homogeneous case where $\omega_j=\omega$, $\theta_j=\theta$ for $j=1,...,nd$ is considered. That is $\delta S_e=-2|\sum_{j=1}^{nd}A_j\int_{-\infty}^{\infty}\dot{x}_{h}^j(t)x_h^j(t)e^{i\omega_j t}dt|$. This is because
\begin{align*}
\delta S(t_0)
=&-2\int_{-\infty}^{+\infty}[\sum_{j=1}^{nd}A_j\dot{x}_{h}^{j}(t)x_h^j(t)]\cos(\omega (t+t_0)+\theta)dt\notag\\
=&-2\cos(\omega t_0+\theta+\phi)\left|\sum_{j=1}^{nd}A_j\int_{-\infty}^{\infty}\dot{x}_{h}^j(t)x_h^j(t)e^{i\omega t}dt\right|,
\end{align*}
where the last step is based on Euler formula and operation of complex numbers, $\sin\phi=\frac{\Im\big{(} \sum_{j=1}^{nd}\int_{-\infty}^{\infty}\dot{x}_{h}^j(t)x_h^j(t) e^{i\omega t}dt\big{)}}{|\sum_{j=1}^{nd}\int_{-\infty}^{\infty}\dot{x}_{h}^j(t)x_h^j(t)e^{i\omega t}dt|}.$ Hence
$\delta S_e=-2|\sum_{j=1}^{nd}A_j\int_{-\infty}^{\infty}\dot{x}_{h}^j(t)x_h^j(t)e^{i\omega t}dt|.$


\par
In both cases, $\delta S_e$ is dependent on the input frequencies $\omega_j$. In certain applications such as molecular systems, one may not have enough resolution to force each degree of freedom at a different frequency, and we thus can consider for simplicity a special homogeneous case where $\omega_1=\cdots=\omega_n=\omega$, $\theta_1=\cdots=\theta_n=\theta$. Even in this case, there exist special value(s) of $\omega$ that make the action $\delta S_e$ vary greatly as we will show in \cref{sec: PR}, and each such $\omega$  will be called a resonant frequency.
Consistent with physical intuitions, resonant frequencies are related to the  intrinsic frequencies of the unperturbed system \cref{uphill2}. Thanks to \cref{action-change}, we will show that if the heteroclinic connection of the unperturbed system \cref{uphill2} can be found, we can determine the resonant frequencies,  
without any rare event simulation which is computationally very costly, no matter that how high-dimensional and how nonlinear the original system \cref{Langevin-Eq} is.
\end{example}


\section{Parameteric resonance: Characterization of the resonant frequency}\label{sec: PR}  As the general effect of time-dependent forcing on metastable transition has been discussed in the previous section, we now move on to focus on specific forcings. One observation is that when the forcing takes the form of $f(x,t)=A\cos(\omega t+\theta)x$, a resonance-like mechanism will prevail, namely that there exists a special input frequency that leads to a significantly stronger reduction of quasi-potential (and hence enhanced transition rate). This phenomenon is referred to as parametric resonance\footnote{We call it parametric resonance because the forcing is a parametric perturbation. If $f(x,t)=A\cos(\omega t+\theta)$ instead, a similar phenomenon will be called linear resonance.}. 

We will use the tool of stationary phase method to estimate the resonant frequency of parametric resonance based on \cref{R3}. For the sake of simplicity, we will detail the method on single particle cases, i.e. $n=1,d=1,\Gamma=\gamma$. Then we will outline the idea of a generalization to higher dimensions
in \cref{FreH}.


\paragraph{The heteroclinic of the forceless deterministic system}
To understand why there is a resonant frequency and what it is, we will utilize the assumption that the system is underdamped (i.e., $\gamma$ small) and perform asymptotic estimations (approximation sign `$\approx$' in the following presentation means equal sign `$=$' in the $\gamma\to 0$ limit). More precisely, letting $q=x$, $p=\dot{x}$, Hamiltonian $H(q,p)=p^2/2+V(q)$, and energy $E(t)=H(q(t),p(t))$, the forceless heteroclinic connection corresponds to fast oscillation along the Hamiltonian level set and slow change of the energy value (as the heteroclinic goes up-hill). Following \cite{Dykman1997Resonant}, we express the configuration and velocity variables of the heterclinic connection by: 
\begin{equation}\label{AIE}
\begin{split}
  &q(t)=\sum_{n=-\infty}^{+\infty}q_n(E(t))\exp(-in\varphi(t)),\\
  &p(t)=\dot{q}(t)=\sum_{n=-\infty}^{+\infty}p_n(E(t))\exp(-in\varphi(t)), \\
  & \dot{E}(t) \approx \gamma\omega(E)I(E), \qquad \dot{\varphi}=\omega(E(t)),
  \end{split}
\end{equation}
where $I$ and $\varphi$ are action and angle variables, $\omega(E(t))$ is the frequency of oscillation in Hamiltonian system $H$ at energy $E(t)$, $q_n(E(t))$, $p_n(E(t))$ are respectively the amplitude of the $n$th overtone of the configuration $q(t)$ and the momentum $p(t)$, and the last line of \cref{AIE} is obtained via averaging $\dot{E}=\gamma p^2$ over the oscillations.


\paragraph{Parametric resonant frequency}
For the case of parametric perturbation $f(q,t)=A\cos(\Omega t+\theta)q$, by applying Theorem \ref{Thm2} and \cref{R3}, the change of transition rate is characterized by:
\begin{equation}\label{ExDS}
\delta S_e=-2A \left|\int_{-\infty}^{\infty}\dot{q}_{h}(t)q_h(t)e^{i\Omega t}dt\right|.
\end{equation}
For convenience, denote $|\int_{-\infty}^{\infty}\dot{q}_{h}(t)q_h(t)e^{i\Omega t}dt|$ by $|I(\Omega)|$. 
Substitution of \cref{AIE} into \cref{ExDS} gives
\begin{align}\label{thcase1}
I(\Omega)
&=\int_{-\infty}^{+\infty}\left(\sum_{n=-\infty}^{+\infty}p_n[E(t)]e^{-in\varphi(t)}\right)\left(\sum_{m=-\infty}^{+\infty}q_m[E(t)]e^{-im\varphi(t)}\right)e^{i\Omega t}dt\notag\\
&=\int_{-\infty}^{+\infty}\sum_{l=-\infty}^{+\infty}e^{-il\varphi(t)}\left(\sum_{m=-\infty}^{+\infty}p_{l-m}[E(t)]q_m[E(t)]\right)e^{i\Omega t}dt\notag\\
&=\frac{1}{\gamma}\int_{-\infty}^{+\infty}\sum_{l=-\infty}^{+\infty}a_l[E(\tau)]\exp\left[i\frac{1}{\gamma}\big(\Omega \tau-l\psi(\tau)\big)\right]d\tau.
\end{align}
Here we use $a_l$ to denote $\sum_{m=-\infty}^{+\infty}p_{l-m}[E(t)]q_m[E(t)]$ for simplicity. Note that the last step in \cref{thcase1} is based on the change of variable $t=\frac{1}{\gamma}\tau$, because of which we further obtain that $\frac{d\varphi}{d\tau}=\frac{1}{\gamma}\omega(E)$, $\frac{dE}{d\tau}\approx \omega(E)I(E)$, and the function $\psi(\tau)=\gamma\varphi(\tau)$ satisfies $\frac{d\psi}{d\tau}=\omega(E)$. 

For smooth heteroclinic induced by smooth potential $V$, $a_l$ decays exponentially and the integral and infinite sum can be exchanged. We thus consider each term and denote it by
\begin{equation}
I_l(\Omega)=\frac{1}{\gamma}\int_{-\infty}^{+\infty}a_l[E(\tau)]\exp\left[i\frac{1}{\gamma}\big(\Omega \tau-l\psi(\tau)\big)\right]d\tau.
\label{eq:qtlkqjnbgtoilboli23b4124}
\end{equation}
Also because $a_l$ typically decays very fast as $|l|$ increases, in the presentation below we focus on $\Omega$ value that resonates with the dominating mode, denoted also by $l$ under slightly abused notation (unless confusion arises, which will be clarified then). Usually the fundamental frequency is the dominating mode, i.e., $l=1$. In less common cases when $a_1=a_{-1}=0$ (e.g., if $\ddot{q}_h+\omega^2(1+\epsilon \cos(\Omega t))q_h=0$ then this happens and the resonant frequency is actually $\Omega=2\omega$ instead of $\Omega=\omega$, i.e. $l=2$ instead; this toy $q_h$ is not a heteroclinic connection though) or when multiple overtones have comparable amplitudes, there is one resonant frequency associated with each dominating mode.

Away from a stationary phase, the integrand in \cref{eq:qtlkqjnbgtoilboli23b4124} has a slowly varying amplitude but a fast oscillating phase, and it thus mostly cancels out after the integration. If $\tau$ has a stationary phase, however, the integral will have a much larger value due to the contribution in the proximity of this stationary phase. This is the intuition behind the choice of a resonant $\Omega$. More precisely, the phase becomes stationary when
$$\frac{d}{d\tau}\big(\Omega \tau-l\psi(\tau)\big)=0,\;\;i.e.,\; \Omega=l\omega(E(\tau^*)),$$ 
where $\tau^*$ is a stationary point of phase.
This gives the value of resonant $\Omega$. We now further understand its details.

To do so, 
we first define a notion of intrinsic frequency. Consider the uphill hetericlinic orbit equation: 
\begin{equation}\label{HQ}
\ddot{q}_{h}-\gamma\dot{q}_{h}+V'(q_{h})=0, \;\;\; q_{h}(-\infty)=x_{a},\;\; q_{h}(+\infty)=x_{u}.
\end{equation}
By linearizing $V'(q)$ around $q=x_{st}$, we obtain
\begin{equation*}
\ddot{q}_{h}-\gamma\dot{q}_{h}+V''(x_{a})q_{h}=0,
\end{equation*}
whose characteristic equation has eigenvalues $r=\frac{\gamma}{2}\pm i\sqrt{V''(x_{a})-\frac{\gamma^2}{4}}$. Note that there are two eigenvalues as long as the imaginary part is nonzero (note $\gamma$ is small), corresponding to the general solutions
$$q_h(t) \approx Me^{\frac{\gamma}{2}t}e^{i\sqrt{V''(x_{a})-\frac{\gamma^2}{4}}}+Ne^{\frac{\gamma}{2}t}e^{-i\sqrt{V''(x_{a})-\frac{\gamma^2}{4}}},$$
in which $M, N$ are arbitrary constants. These solutions in general describe oscillation at frequency $\omega=\sqrt{V''(x_{a})-\frac{\gamma^2}{4}}.$ We call $\omega_0=\sqrt{V''(x_{a})}$ the intrinsic frequency as we're using $\gamma\to 0$ asymptotics, although denoting the intrinsic frequency by $\omega_0=\sqrt{V''(x_{a})-\frac{\gamma^2}{4}}$ will not affect the results either. 

In \cref{HQ}, the heteroclinic orbit $q_h(t)$ circles around the metastable state $x_{a}$ (corresponding to minimal $E_m=V(x_{a})$) 
with intrinsic frequency $\omega_0$ for infinite amount of time (see \cref{figEHo} for an illustration). In this phase, the slowly changing energy $E(t)$ is $E(\tau^*)=E_m$. 
We thus obtain resonant frequency $\Omega=l\omega(E(\tau^*))=l\omega_0$. 


We then estimate $|I_1(\Omega)|$. Let $\bar{\psi}(\tau)=\Omega \tau-\psi(\tau)$ which has a stationary point at $\tau=\tau^*$ with $$\bar{\psi}'(\tau^*)=0, \bar{\psi}''(\tau^*)=-\frac{d\omega(E)}{dE}\frac{dE}{d\tau}|_{\tau=\tau^*}\approx-\frac{d\omega(E)}{dE}I(E)
\omega(E)|_{\tau=\tau^*}\neq0.$$ Evaluating the integral $I_1(\Omega)$ by the method of stationary phase (see e.g., \cite{tao2004lecture}; see also \cref{AB}), we get
\begin{equation}\label{AS}
|I_1(\Omega)|\sim\left[\frac{1}{\sqrt{\gamma}}|a_1(E)|\sqrt{\frac{2\pi}{\left|\frac{d\omega(E)}{dE}I(E)\omega(E)\right|}}\right]_{\tau=\tau^*}.
\end{equation}
The symbol `$\sim$' means that the left and right hand sides agree at the leading order in an asymptotic
expansion in $\gamma$. 
This quantitative result shows that, for example, smaller friction coefficient corresponds to a bigger change of transition rate induced by the parametric excitation.

\begin{remark}\label{LinearResonance}
Similar to linear resonance already considered in literature (see, e.g. \cite{Dykman1997Resonant}), the parametric resonant frequency also corresponds to intrinsic frequency $\omega_0$. This may sound inconsistent with the parametric resonant frequency of linear (e.g., $\ddot{q}_h+\omega_0^2(1+\epsilon \cos(\Omega t))q_h=0$) or weakly nonlinear systems (e.g., \cite{Surappa2018Analysis, tao2019simply}) which is $\Omega=2\omega_0$, but the latter is in fact, as discussed above, a special case where $a_1=a_{-1}=0$. As the potential of our system is in general arbitrarily nonlinear, all harmonics could exist (i.e., none of $q_n$'s vanishes). For example, if $q_h(t)=\cos(t)+\cos(2t)$, then this happens and the fundamental frequency of $\dot{q}_h q_h$ is in fact $1$, not $2$, just like that of $\dot{q}_h$ (which corresponds to linear resonance). However, parametric resonance is often more prominent than linear resonance, measured in terms of peak sharpness (defined in \cref{Example2}) for some special models, and we will see this numerically.
\end{remark}

\begin{remark}\label{FreH}
For multi-dimensional case (e.g. $f(q,t)_j=A_j\cos(\Omega t)q_j$), by applying  Theorem \ref{Thm2} and \cref{R3}, the change of transition rate is expressed by 
\begin{equation*}
 \delta S_e=-2\left|\sum_{j=1}^{nd}A_j\int_{-\infty}^{\infty}\dot{q}_{h}^j(t)q_h^j(t)e^{i\Omega t}dt\right|,   
\end{equation*}
where $q_h^j(t)$ (resp. $\dot{q}_h^j(t)$) denotes the $j$th component of $q_h(t)$ (resp. $\dot{q}_h(t)$). As in \cref{AIE}, we further express $q_h^j(t)$ and $\dot{q}_h^j(t)$ as modulated Fourier series, for $j=1,...,nd$. After substitution into $\delta S_e$, we can conduct estimation on integral $\delta S_e$ to understand resonant frequency of parametric resonance. Different from single particle case, i.e. $n=d=1$, Hess$V(x_a)$ will have multiple eigenvalues, and each will give a possible resonant frequency (the strength of each depends on the detailed interactions of coefficients, which is problem dependent, and hence no general claim will be stated). This will be verified numerically.
\end{remark}


\section{Experimental results}
\label{sec:experiments}
We now perform numerical experiments on specific models to 
illustrate our theoretical results.
\subsection{Example 1: Double-well potential}\label{Example1}


As a first test, consider a single particle $q$ moving in
a one dimension potential
$$V(q)=\frac{(1-q^2)^2}{4}.$$
For this example, $n=d=1, \Gamma=\gamma$. The potential $V(q)$ has two wells of equal depth, situated at $q_{a}=-1$, $q_{b}=1$. Saddle point exists at $q_u=0$. We use this example to explore the effect of parametric  forcing $f(q,t)=A\cos(\Omega t)q$ on metastable transition rate from $q_a$ to $q_b$ in light of \cref{R3}.

\paragraph{Parametric resonance} We first approximate the heteroclinic orbit of the forceless system by numerically solving the uphill equation \cref{uphill2}. More precisely, we take the time-reversed uphill equation with a sign flip on velocity,  and simulate the ODE. Since this is a second order boundary value problem and the boundary points at $t=\pm\infty$ incur numerical difficulty, we make an approximation by choosing an initial $q,p$ infinitesimally away from the saddle point, in the direction of the stable eigenvector of the uphill vector field linearized at the saddle $q_u=0$, then simulate an initial value problem using 4th-order Runge-Kutta (RK4) for long enough with sufficiently small time step, and finally collect the result backward in time.
\cref{figEHo} shows an obtained heteroclinic connection from $q_a=-1$ to $q_u=0$ with friction coefficient $\gamma=0.1$ in phase space. Since $q_a$ is a fixed point, the path circles around it for arbitrarily long time.
\begin{figure}\label{figEHo}
\centering
 \includegraphics[width=0.6\textwidth]{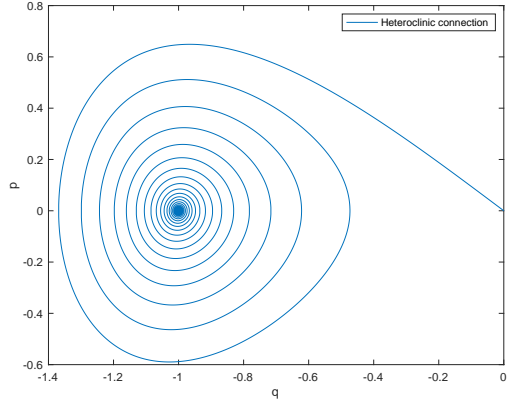}
 \caption{ (Color online)
A heteroclinic connection (or maximum likelihood path ) from $-1$ to $0$ in phase space: $\gamma=0.1$, $p=\dot{q}$ denotes momentum of particle.}
 \end{figure}

With the unforced heteroclinic orbit, now we can examine the dependence of $\delta S_e$ on input frequency $\Omega$. For convenience, denote  $|\int_{-\infty}^{\infty}\dot{q}_{h}(t)q_h(t) e^{i\Omega t}dt|$ by $|I(\Omega)|$. For each $\Omega$, we compute $|I(\Omega)|$ by numerically approximating the integral via piecewise trapezoidal quadrature with high enough resolution.
\cref{Figaction} shows the relationship between the leading order correction to action $|I(\Omega)|$ and $\Omega$ for parameter $\gamma=0.1$.
We observe that, there exists special $\omega^*$ at which $|I(\Omega)|$ peaks, corresponding to the resonant frequency in our theoretical discussion. More details now follow:


\paragraph{Parametric resonant frequency} By the theoretical analysis conducted in 
\cref{sec: PR}, the exact parametric resonant frequency is intrinsic frequency  $\omega_0=\sqrt{V''(q_{a})}\approx1.4142$, with which heteroclinic orbit $q_h(t)$ oscillates around metastable state $q_{a}$. Consistent with it, as shown in \cref{Figaction}, the function $|I(\Omega)|$ displays a sharp peak near $\omega_0$, and sequentially weaker peaks near its integer multiples. This is numerical evidence that the resonant frequencies are related to the intrinsic frequenct $\omega_0$ of the unperturbed system \cref{uphill2}. 
\begin{figure}
\centering
 \includegraphics[width=0.6\textwidth]{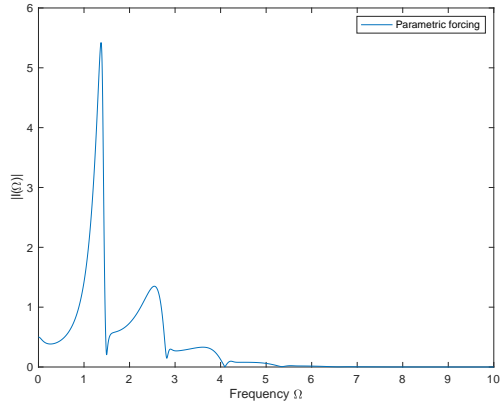}
 \caption{ (Color online)
$\delta S=-2A\epsilon |I(\Omega)|$: damping $\gamma=0.1$. The dependence of  action correction $|I(\Omega)|$ of the double-well system on frequency $\Omega$ in the case of parametric forcing.}
  \label{Figaction}
 \end{figure}
 
\paragraph{Estimation of $I(\Omega)$ near a resonant frequency} We proceed to depict the dependence of $|I(\Omega)|$ on $\Omega$ with $\gamma$ fixed as $\gamma=0.1,\;0.01,\; 0.001$, which is shown in \cref{Fig3gamma}. As we can see, smaller values of $\gamma$ lead to more prominent peaks, i.e. near resonant frequency $\omega^*=\omega_0$, the value of $|I(\omega^*)|$ is larger when $\gamma$ decreases. One can further find that $|I(\omega^*)|$ increases by a factor of $3.2160$ when $\gamma$ varies from $0.1$ to $0.01$ or a factor of $3.33$ from $0.01$ to $0.001$, by comparing values of $|I(\omega^*)|$ (marked in \cref{Fig3gamma} with arrows) 
corresponding to $\gamma=0.1, 0.01, 0.001$. Interestingly, such numerical relation between $|I(\omega^*)|$ and $\gamma$ satisfies $I(\omega^*)\sim\sqrt{\frac{1}{\gamma}}K$ approximately. This scaling with $\gamma$ well agrees with our stationary phase estimate \cref{AS}.
\begin{figure}
\centering
 \includegraphics[width=0.6\textwidth]{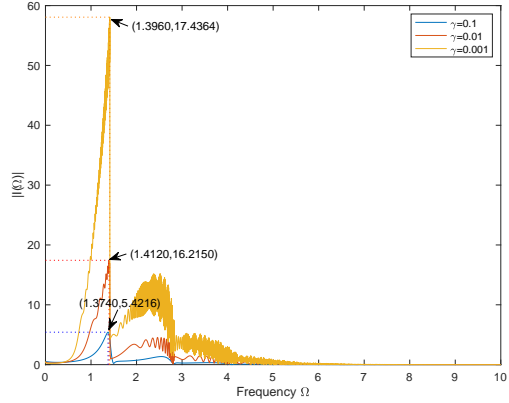}
 \caption{ (Color online)
$\delta S=-2A\epsilon |I(\Omega)|$. The dependence of  action correction $|I(\Omega)|$ of the stochastic double-well system on frequency $\Omega$ for different damping $\gamma=0.1, 0.01, 0.001$, respectively.}
  \label{Fig3gamma}
 \end{figure}


\subsection{Example 2: Nonlinear pendulum (periodic potential)}\label{Example2}
To further test our theoretical results, we now consider an even more nonlinear potential, 
$$V(q)=\sin q.$$
Here $n=d=1$. We will also use this example to illustrate the differences between linear resonance and parametric resonance.

Focusing on a compact neighborhood in which this potential has two local minima located in $q_a=-\frac{\pi}{2}$, $q_b=\frac{3}{2}\pi$, a saddle point located in $q_u=\frac{\pi}{2}$ separates their basins of attraction. 
Consider the two special forms of forcing $f(q,t)$ discussed in \cref{R3}, the first being a linear forcing $f(t)=A\cos(\Omega t)$, and the second being a parametric forcing $f(q,t)=A\cos(\Omega t)q$.

In order to compare quantitatively, let us introduce the notion of peak sharpness, which is defined as the change ratio of $S_e(\Omega)$ in $\Omega$, namely,
\begin{equation}\label{rate}
\rho_p(\Omega)=\frac{\left|\int_{-\infty}^{\infty}\dot{q}_{h}(t)q_h(t)e^{i\Omega t}dt\right|}{\left|\int_{-\infty}^{\infty}\dot{q}_{h}(t)q_h(t)e^{i(\Omega+d\Omega) t} dt\right|},\; \rho_l(\Omega)=\frac{\left|\int_{-\infty}^{\infty}\dot{q}_h(t)e^{i\Omega t}dt\right|}{\left|\int_{-\infty}^{\infty}\dot{q}_h(t)e^{i(\Omega+d\Omega) t} dt\right|},
\end{equation}
where $d\Omega$ is an infinitesimal increment. If $\rho_p(\omega^*)>\rho_l(\omega^*)$ holds, it means that parametric excitation at a resonant frequency lead to a sharper peak than that of linear excitation, and we utilize it as a basis to check if parametric resonance is more apparent than linear resonance.

\begin{figure}
 \centering
 \subfigure[]{
 \includegraphics[width=0.4\textwidth]{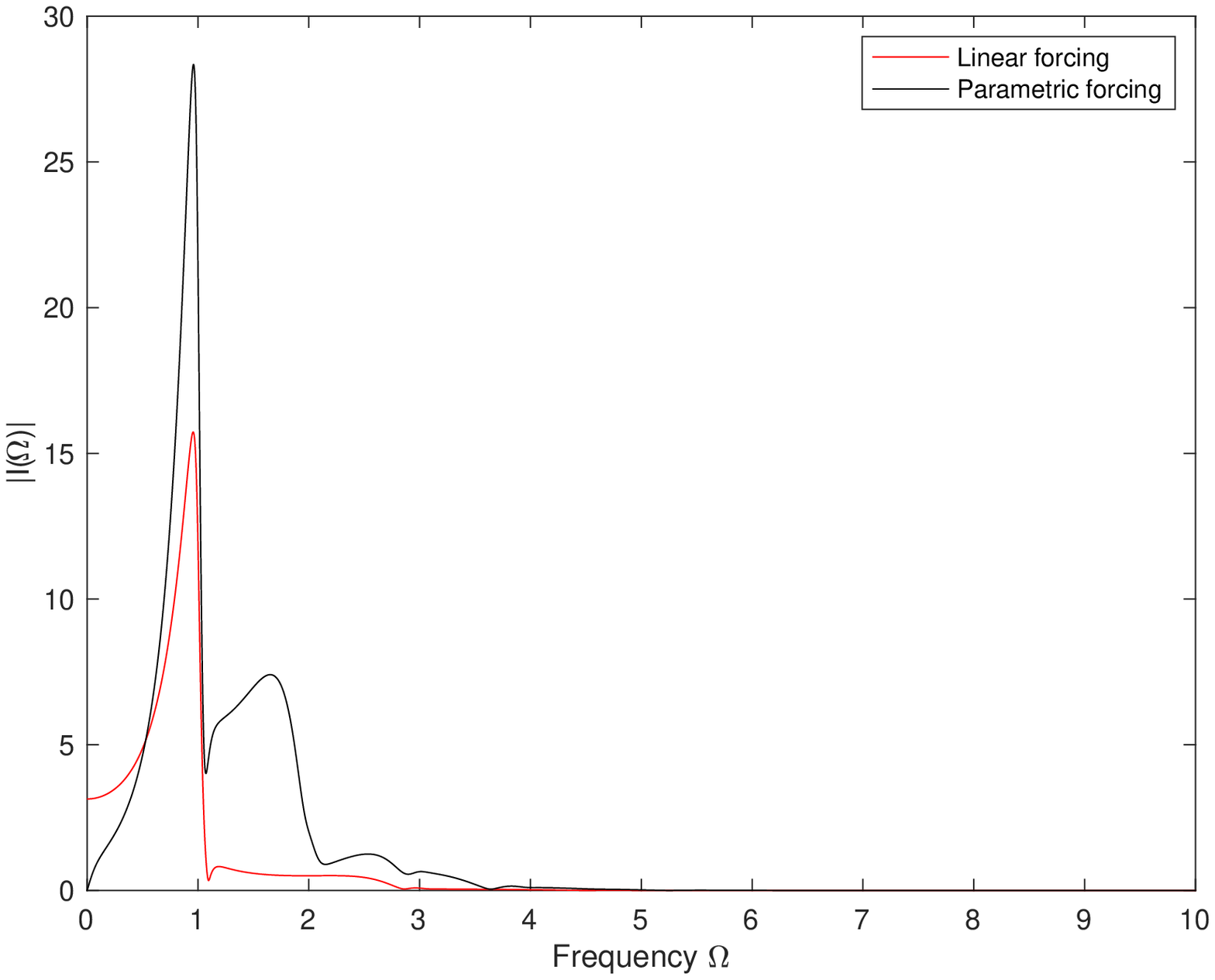}
 }
 \subfigure[]{
 \includegraphics[width=0.4\textwidth]{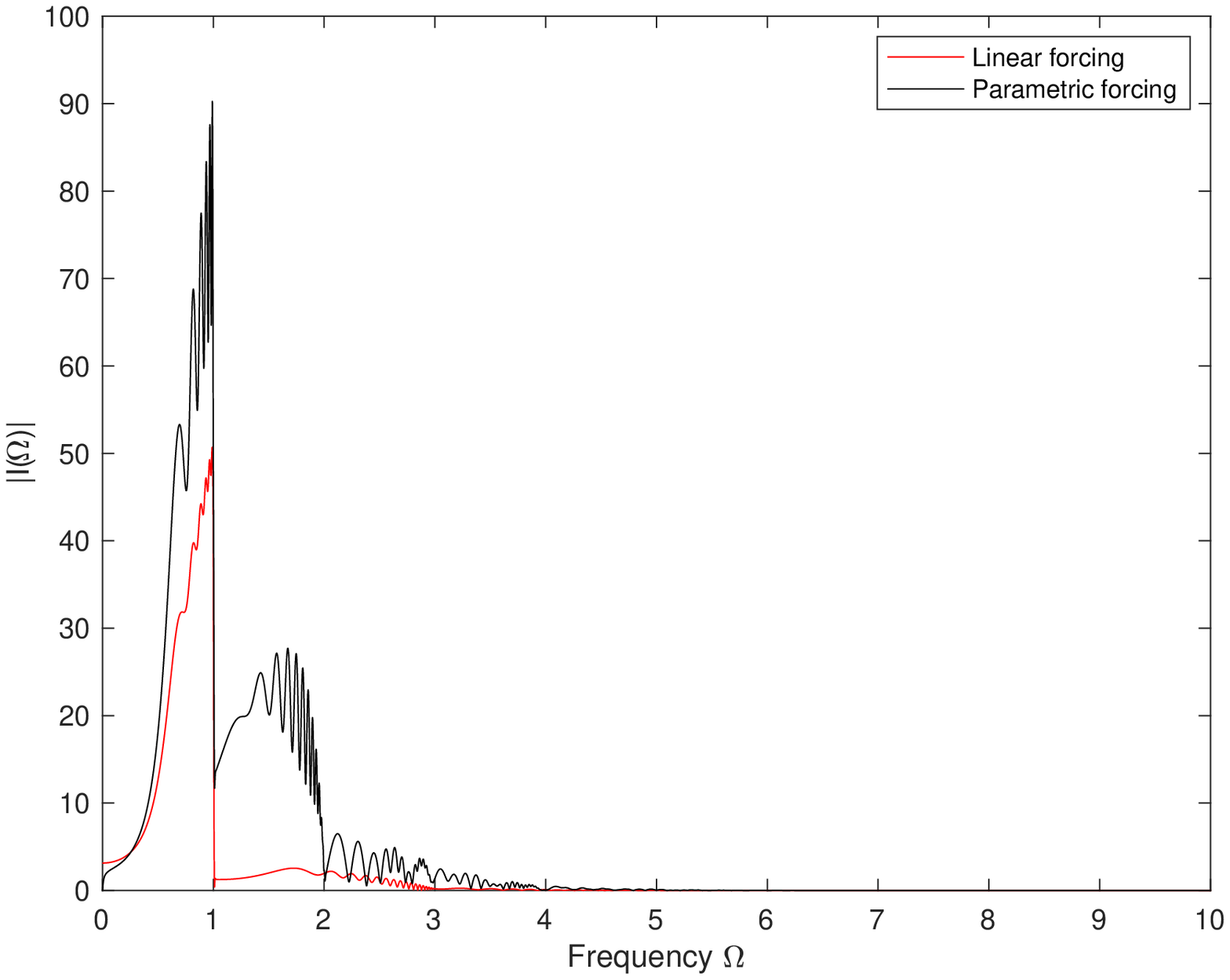}
 }
  \caption{ (Color online) $\delta S=-2A\epsilon |I(\Omega)|$. The dependence of  action correction $|I(\Omega)|$ of system on frequency $\Omega$ in two special cases, respectively. (a) damping $\gamma=0.1$, (b) damping $\gamma=0.01$.  }
 \label{figaction-sinq}
 \end{figure}

As in \cref{Example1}, numerically computed $|I(\Omega)|$ for $\gamma=0.1, 0.01$ is plotted in \cref{figaction-sinq}, respectively. Again, the main peaks of $|I(\Omega)|$ correspond to intrinsic frequency $\omega^*\approx\omega_0\approx1$, both in the case of additive and parametric forcing. In terms of \cref{rate}, we can compute the $\rho_l(\omega^*)$, $\rho_p(\omega^*)$ both for $d\Omega=0.01,\;-0.01$ and $\gamma=0.1,\;0.001$, and list them in \cref{Tab:01}. According to these data, it is interesting to see that the peak of parametric resonance is sharper. One can further find that $\rho_p(\omega^*)$ varies more greater than that of $\rho_l(\omega^*)$, as $\gamma$ decreases from $0.1$ to $0.01$. The results in this example seems to suggest parametric resonance is often more prominent than linear resonance in terms of peak sharpness. 


\begin{table}\caption{Values of peak sharpness}
\centering
\label{Tab:01}
\begin{tabular}{cccc}
  \hline
  Infinitesimal  increment  &  Cases & $\rho_p(\omega^*)$ & $\rho_l(\omega^*)$ \\
  \hline
  \multirow{3}{*}{d$\Omega$=0.01}  &$\gamma=0.1$ & 1.0118 & 1.0094\\
  \cline{2-4}
   &$\gamma=0.01$ & 2.0212& 1.2706\\
  \cline{1-4}
 \multirow{3}{*}{d$\Omega=-0.01$} & $\gamma$=0.1 & 1.0077  &1.0060\\
  \cline{2-4}
& $\gamma$=0.01& 1.3957& 1.0657\\
  \cline{1-4}
  \hline
\end{tabular}
\end{table}

\subsection{Example 3: Lennard-Jones molecular cluster}\label{Example3}

Finally, let us consider a practical application, for which we apply our techniques 
to a multi-particle molecular system. 
Based on  Theorem \ref{Thm2}, \cref{R3} and \cref{FreH},
we now characterize the parametric resonant frequency in higher dimension case numerically.

We consider $n=36$ molecules in a $d=2$-dimensional periodic box (with box sizes $s_x=3\sqrt{3}$, $s_y=6$ in x-, y- directions respectively). The $j$th molecule's location is denoted by $q^{(j)}=(x^{(j)},y^{(j)})^T\in (\mathbb{R}/s_x)\times (\mathbb{R}/s_y)$. The governing dynamics is  
\begin{equation}\label{Ex2}
\begin{split}
  &\ddot{x}^{(j)}+\gamma\dot{x}^{(j)}=-\frac{\partial}{\partial x^{(j)}}V_{LJ}(\cdot)+\varepsilon A_1\cos(\omega t)x^{(j)}+\sqrt{\mu} \gamma^{\frac{1}{2}}\xi_x^{(j)}(t),\\
  &\ddot{y}^{(j)}+\gamma\dot{y}^{(j)}=-\frac{\partial}{\partial y^{(j)}}V_{LJ}(\cdot)+\varepsilon A_2\cos(\omega t)y^{(j)}+\sqrt{\mu} \gamma^{\frac{1}{2}}\xi_y^{(j)}(t),\end{split}
\end{equation}
for $j=1,\cdots, 36$. We use the notation $$r_{ij}=\left(\left|\text{mod}\left(x^{(i)}-x^{(j)}+\frac{s_x}{2},s_x\right)-\frac{s_x}{2}\right|^2+\left|\text{mod}\left(y^{(i)}-y^{(j)}+\frac{s_y}{2},s_y\right)-\frac{s_y}{2}\right|^2\right)^{\frac{1}{2}}$$
to denote the distance between the $i$th and $j$th molecules under periodic boundary condition (i.e., geodesic distance on the 2-torus). 
$V_{LJ}$ is Lennard-Jones potential which is widely used in molecular modeling, and it is the sum of pairwise interactions,
$$V_{LJ}(r)=\sum_{i\neq j, i,j=1}^n\left[\left(\frac{r_0}{r_{ij}}\right)^{12}-2\left(\frac{r_0}{r_{ij}}\right)^{6}\right],$$
where $r_0$ is a constant parameter denoting the characteristic distance of particles, taken as $r_0=1$ here. 
This potential has a lot of local minima, and for an important material sciences application, we consider a global minimum $q_b$ corresponding to a perfect lattice configuration, and a local minimum $q_{a}$ corresponding to material with a local defect, and we are interested in how to turn the material from the defective state $q_{a}$ to the perfect state $q_b$. In addition, there is a saddle point at $q_s$ on their separatrix between $q_a$ and $q_b$, and these fixed points are depicted in \cref{fig3}. At 
the minima $V(q_a)\approx-109.7064$, $V(q_b)\approx-120.4712$, and at
saddle $V(q_s)\approx-106.8218$. In this case, increasing the metastable transition rate from $q_a$ to $q_b$ is of particular importance, as it corresponds to healing the defect of the material. This transition is still a rare event, but we will see its likelihood can be significantly increased by an appropriate homogeneous external vibration (i.e., shaking the material to perfect its lattice).

\begin{figure}
 \centering
 \subfigure[]{
 \includegraphics[width=0.4\textwidth]{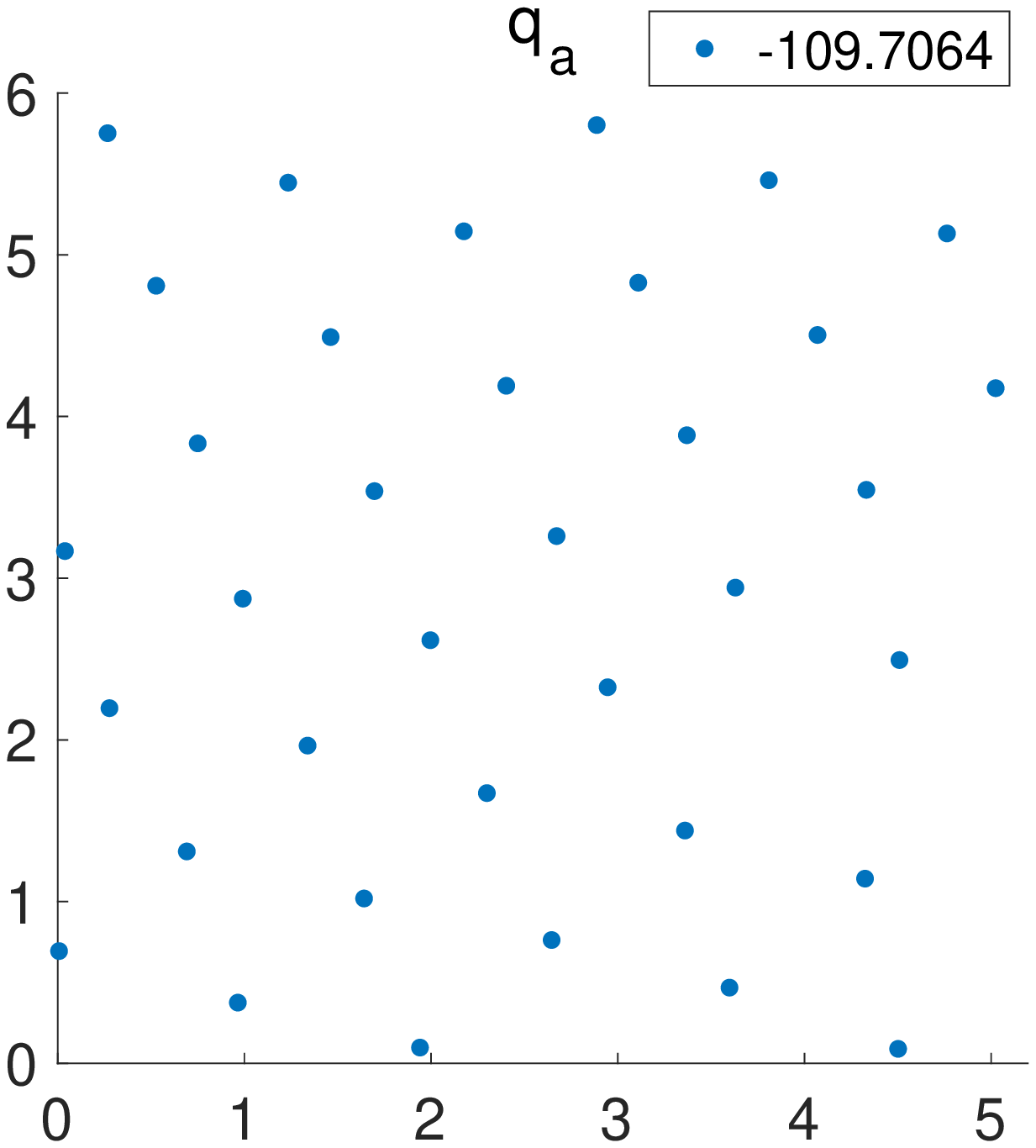}
 }
 \subfigure[]{
 \includegraphics[width=0.4\textwidth]{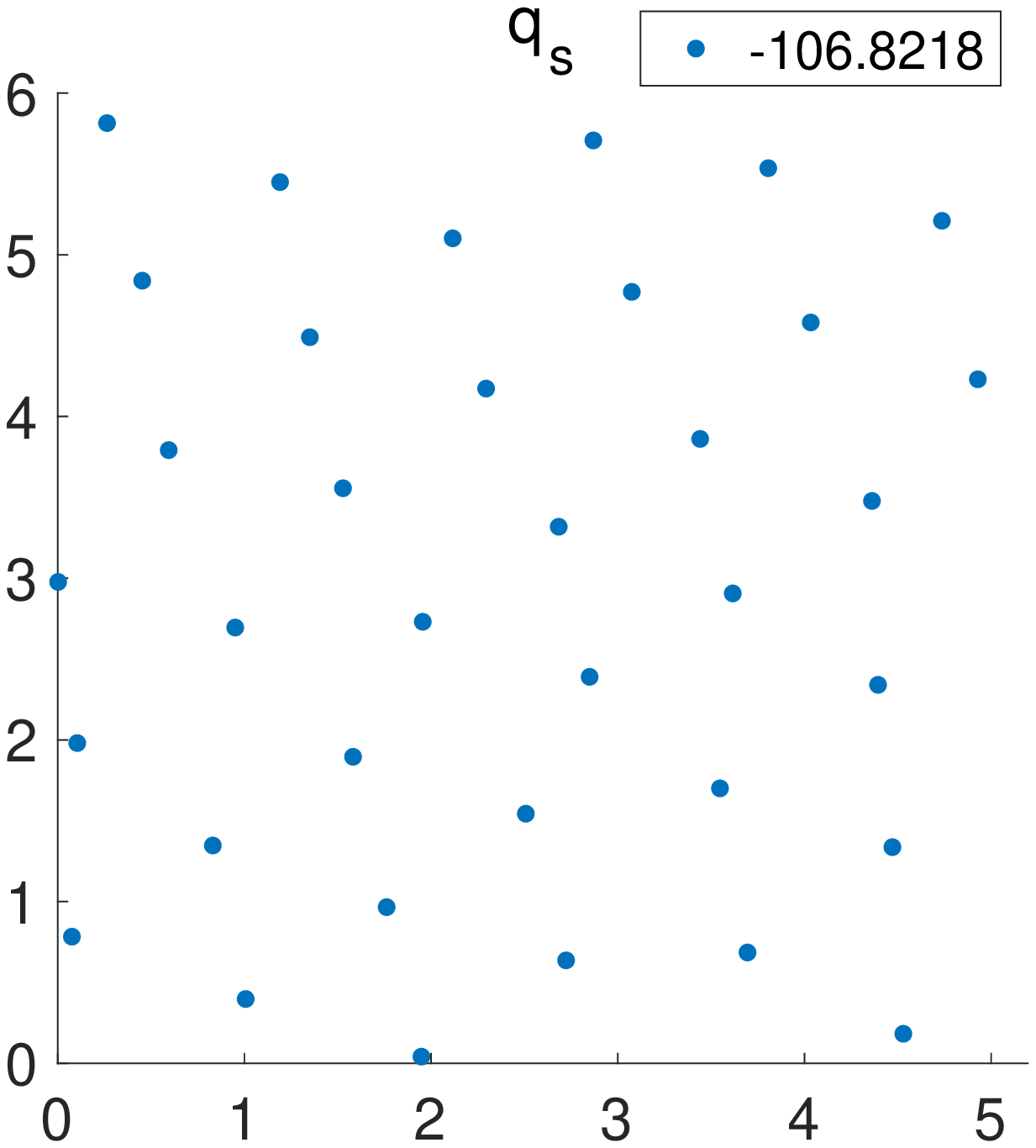}
 }
 \subfigure[]{
 \includegraphics[width=0.4\textwidth]{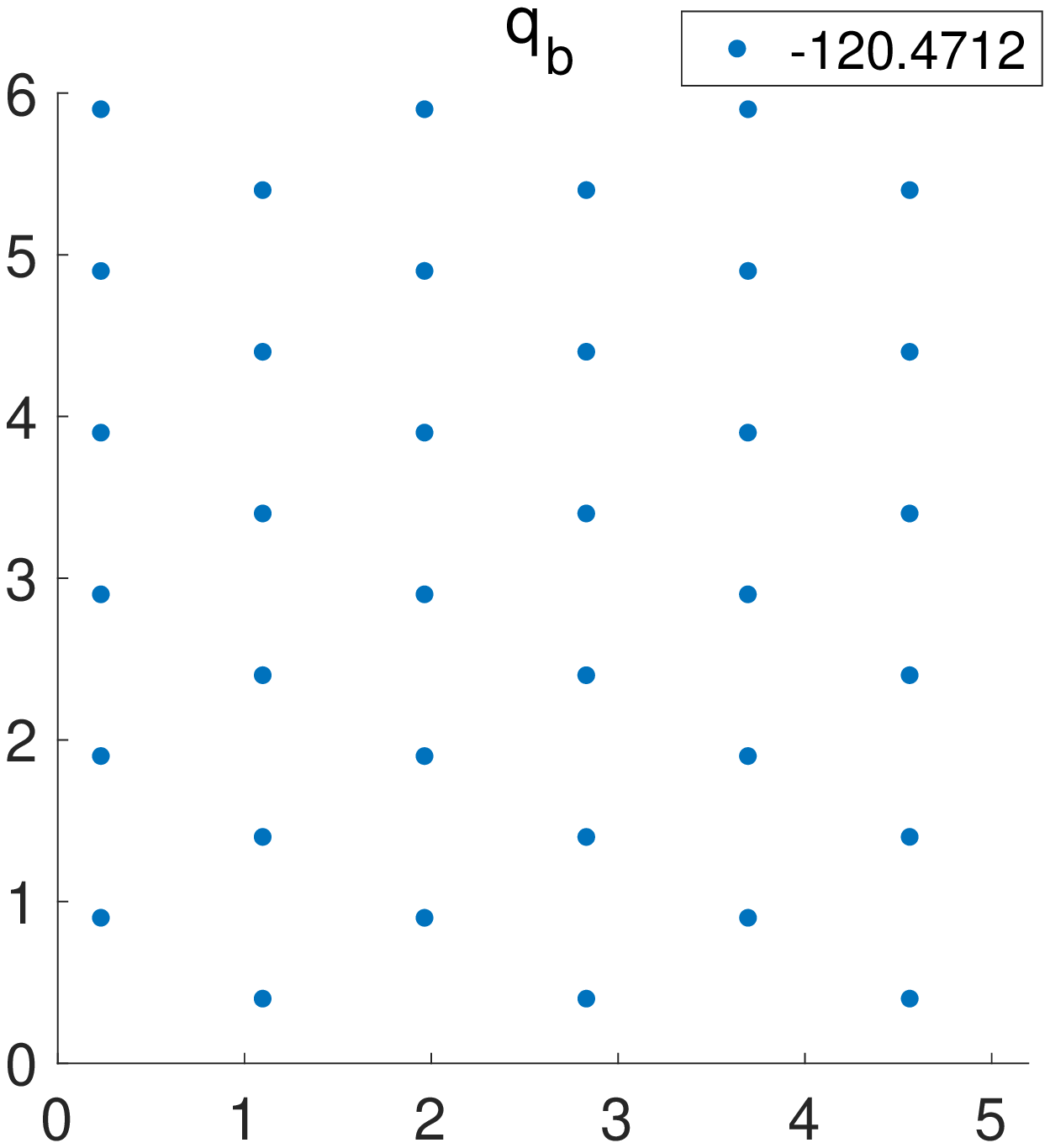}
 }
  \caption{ (Color online) Configurations and corresponding potential values: $r_0=1$. (a) Initial configuration, (b) Saddle configuration, (c) Final configuration.}
 \label{fig3}
 \end{figure}


For the case of parametric perturbation discussed here, by applying \cref{Thm2} and \cref{R3}, the change of the transition rate from $q_a$ to $q_b$ is given by a more simple form: 
 $$\delta S_e=\left\{
\begin{aligned}
&-2A_1\left|\sum_{j=1}^{n}\int_{-\infty}^{\infty}\dot{x}^{(j)}(t)x^{(j)}(t)e^{i\omega t}dt\right|\quad\;\;\;\;\;\;\;\;\;\;\;\;\;\;\;\;\;\;\;\;\;\;\;\;\;\;if~A_1\neq0, \;A_2=0;\\
&-2A_2\left|\sum_{j=1}^{n}\int_{-\infty}^{\infty}\dot{y}^{(j)}(t)y^{(j)}(t)e^{i\omega t}dt\right|  \quad \;\;\;\;\;\;\;\;\;\;\;\;\;\;\;\;\;\;\;\;\;\;\;\;\;\;if~A_1=0,\;A_2\neq 0; \\
&-2A\left|\sum_{j=1}^{n}\int_{-\infty}^{\infty}\left(\dot{x}^{(j)}(t)x^{(j)}(t)+\dot{y}^{(j)}(t)y^{(j)}(t)\right)e^{i\omega t}dt\right|~if~A_1=A_2=A\neq0.
\end{aligned}
\right.$$
\noindent As in \cref{Example1}, let us first compute the heteroclinic connection \cref{uphill2} from $q_a$ to $q_s$ numerically. Then the application we need to do is to find the optimal frequency $\omega^*$, vibrating $q_a$ into $q_b$ through $q_s$, to achieve the purpose of heal defect. 


\paragraph{Parametric resonant frequency} For simplicity, let $|F(\omega)|$ denote the $|\cdot|$ part in above formula. We numerically computed the heteroclinic orbit in the unforced system, which gives $x,y,\dot{x},\dot{y}$, and then evaluate $|F(\omega)|$ via quadrature over a range of $\omega$ values. The results of the three different parametric forcing cases, respectively corresponding to vibrating in the x-, y-, and both directions, are plotted in \cref{fig4} for damping coefficient $\gamma=1$. 
\begin{figure}
 \centering
 \subfigure[]{
 \includegraphics[width=0.4\textwidth]{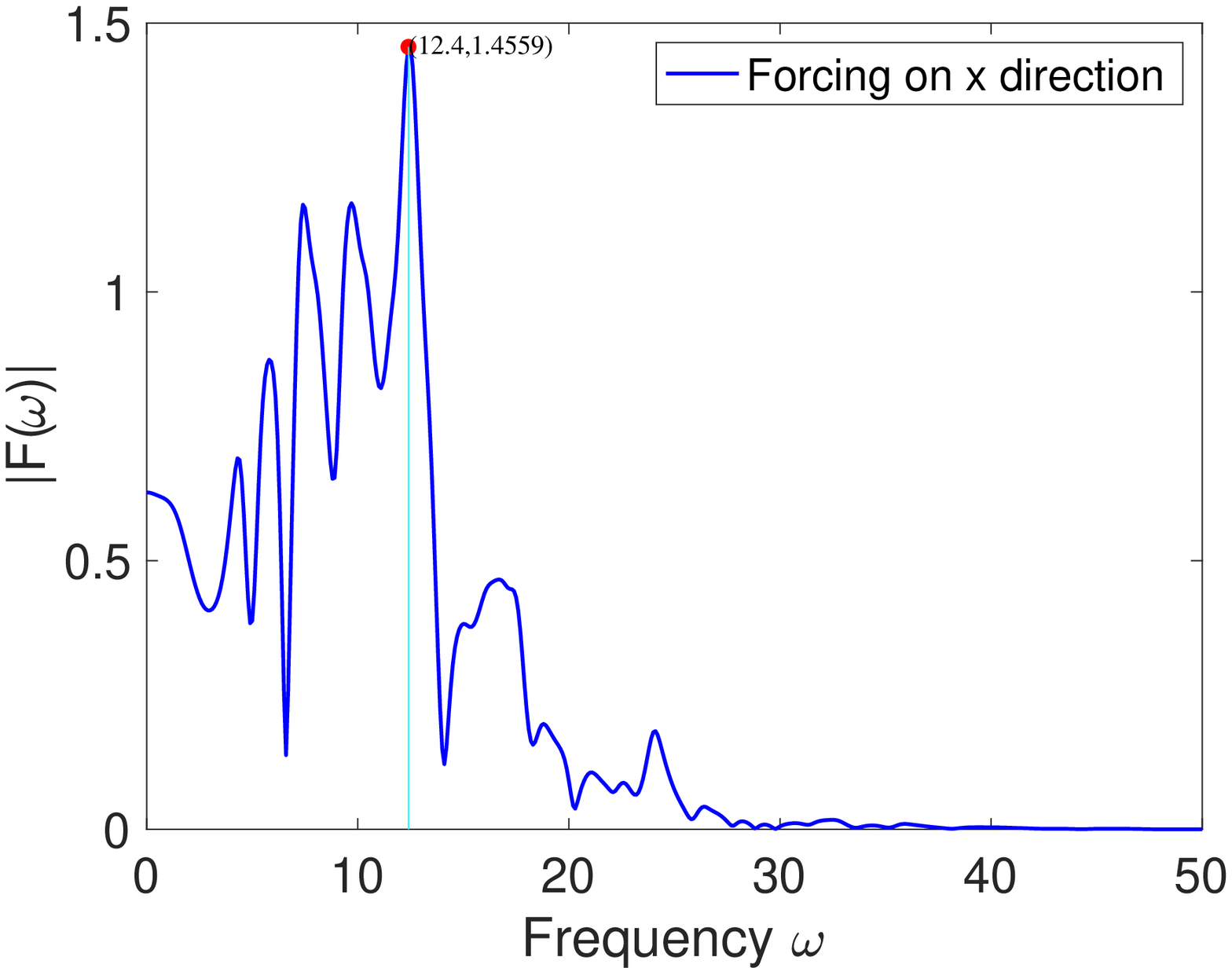}
 }
 \subfigure[]{
 \includegraphics[width=0.4\textwidth]{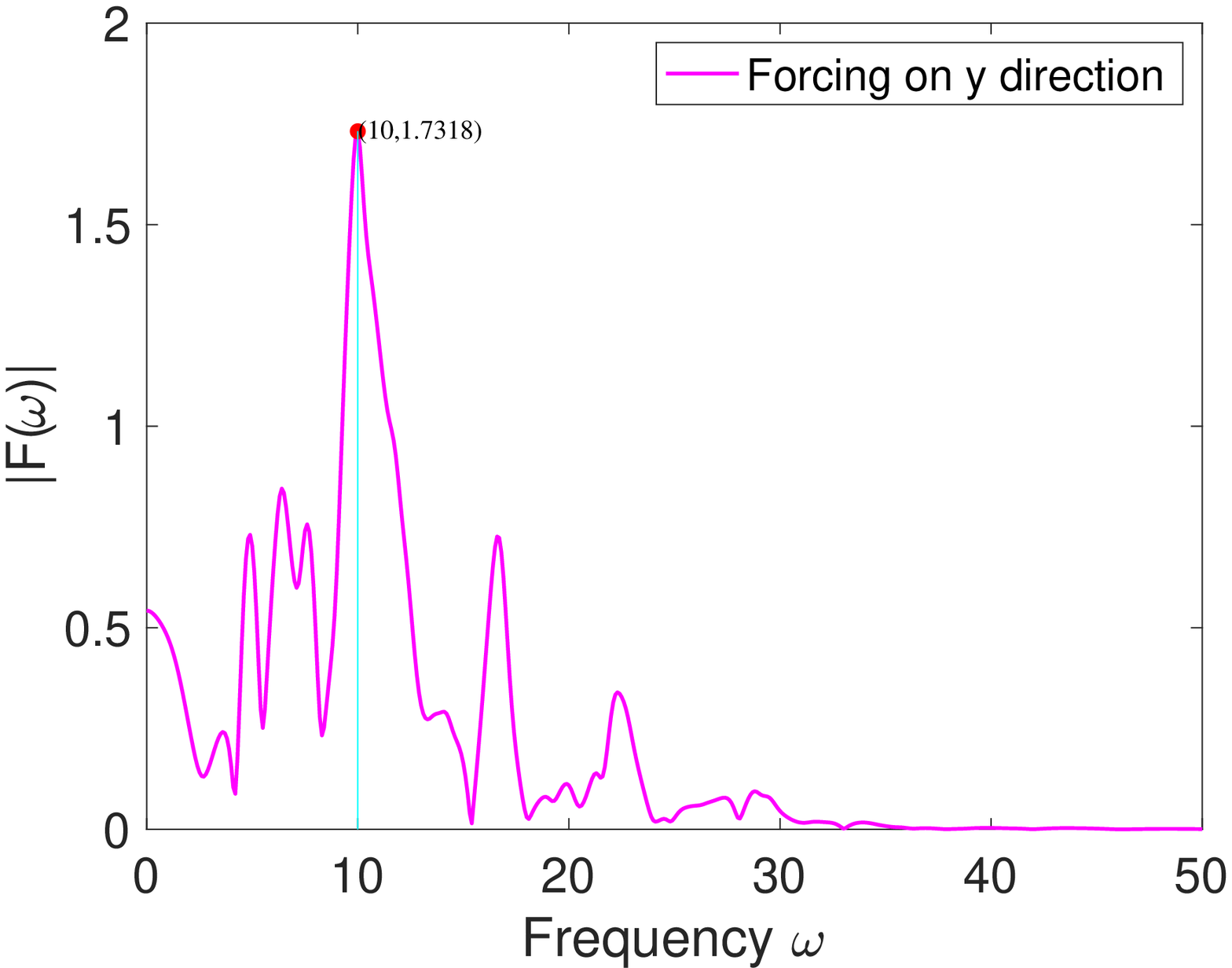}
 }
 \subfigure[]{
 \includegraphics[width=0.4\textwidth]{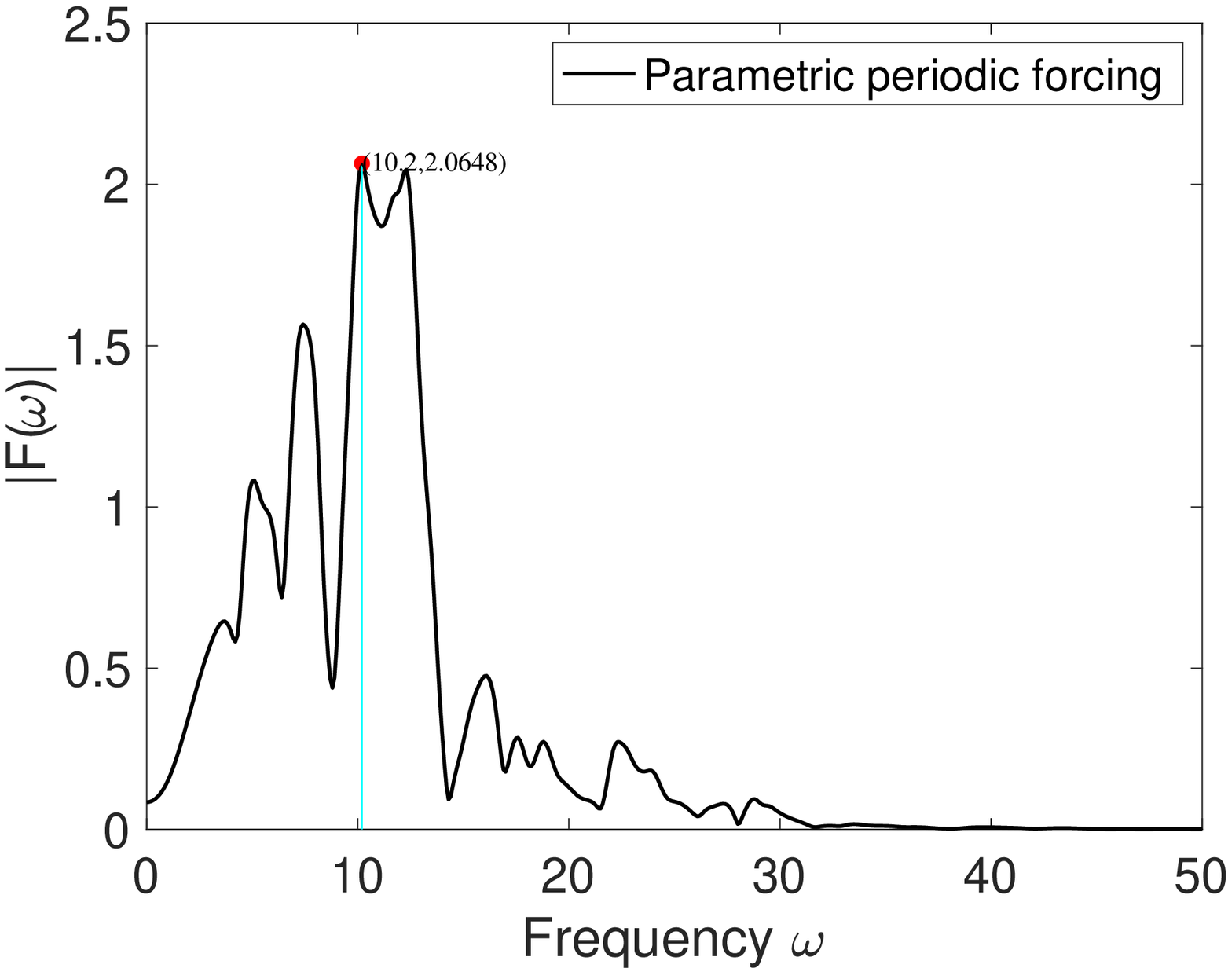}
 }
  \caption{ (Color online) $\delta S_e=-2A|F(\omega)|$: damping $\gamma=1$. The dependence of  action correction $|F(\omega)|$ of \cref{Ex2} on frequency $\omega$ in three special cases, respectively.}
 \label{fig4}
 \end{figure}
We again see that $|F(\omega)|$ displays clear peaks. Different from the problems of single particle in one dimension, Hess$V(q_a)$ is now a $72\times72$ matrix with multiple eigenvalues instead of just one. By examining the list of eigenvalues, we see that resonant frequencies again coincide with eigenvalues of the matrix Hess$V(q_a)$. The strongest resonant frequency is marked in each plot. Therefore, to heal a defective material, one possibility is to use our theory and compute the resonant frequencies, and then try vibrations at those frequencies. Of course, given this is a high dimensional system, there are many different ways to combine vibrations at each dimension; if one wants to optimize the combination, our theory can also help and one no longer has to conduct computationally expensive rare event simulations, but this becomes an optimization problem which deserves an adequate investigation in a different study.

\paragraph{Comparison to linear forcing}
The rest of this subsection is devoted to a comparison to the case of linear perturbation; a clear advantage of parametric forcing will be illustrated. Specifically, the governing dynamics for the case of linear perturbation is  
\begin{equation}\label{Ex2linear}
\begin{split}
  &\ddot{x}^{(j)}+\gamma\dot{x}^{(j)}=-\frac{\partial}{\partial x^{(j)}}V_{LJ}(\cdot)+\varepsilon A_1\cos(\omega t)+\sqrt{\mu} \gamma^{\frac{1}{2}}\xi_x^{(j)}(t),\\
  &\ddot{y}^{(j)}+\gamma\dot{y}^{(j)}=-\frac{\partial}{\partial y^{(j)}}V_{LJ}(\cdot)+\varepsilon A_2\cos(\omega t)+\sqrt{\mu} \gamma^{\frac{1}{2}}\xi_y^{(j)}(t),\end{split}
\end{equation}
for $j=1,\cdots, 36$. Again, based on \cref{Thm2} and \cref{R3}, the change of the transition rate from $q_a$ to $q_b$ is written in a more simple form : 
 $$\delta S_e=\left\{
\begin{aligned}
&-2A_1\left|\sum_{j=1}^{n}\int_{-\infty}^{\infty}\dot{x}^{(j)}(t)e^{i\omega t}dt\right|\quad\;\;\;\;\;\;\;\;\;\;\;\;\;\;\;\;\;\;\;\;\;\;\;\;\;\;if~A_1\neq0, \;A_2=0;\\
&-2A_2\left|\sum_{j=1}^{n}\int_{-\infty}^{\infty}\dot{y}^{(j)}(t)e^{i\omega t}dt\right|  \quad \;\;\;\;\;\;\;\;\;\;\;\;\;\;\;\;\;\;\;\;\;\;\;\;\;\;if~A_1=0,\;A_2\neq 0; \\
&-2A\left|\sum_{j=1}^{n}\int_{-\infty}^{\infty}\left(\dot{x}^{(j)}(t)+\dot{y}^{(j)}(t)\right)e^{i\omega t}dt\right|\;\;\;\;\;\;\;\;\;\;\; if~A_1=A_2=A\neq0.
\end{aligned}
\right.$$
Let $|F(\omega)|$ still denote the $|\cdot|$ part in above formula. The frequency response results of the three different linear forcing cases, respectively corresponding to vibrations in the x-, y-, and both directions, are plotted in \cref{7a} for damping coefficient $\gamma=1$. One may again try to identify special $w^*$ values at which $|F(\omega)|$ peaks, but these peaks are not as well-defined as that in the parametric resonance case. In fact, note the drastic difference between $|F|$ values in the parametric case ($\sim 1$) and this (linear) case ($\sim 10^{-14}$). We feel there is no strong resonance in this case any more, and integrals cancel out so that the computed $\delta S_e$ is dominated by (small) numerical errors. To illustrate this cancellation, the plots of $Re[\int_{-\infty}^{\infty}\dot{x}^{(j)}(t)e^{i\omega t}dt]$ (denoted by $H(\omega)$) as functions of $\omega$ for several different j's are also provided in \cref{7b}.



This is empirical evidence of the advantage of parametric excitation, at least it leads to resonant enhancement of the recovery of material defect. 
\begin{figure}
\centering
\subfigure[]{
\begin{minipage}[b]{0.4\textwidth}
\includegraphics[width=\textwidth]{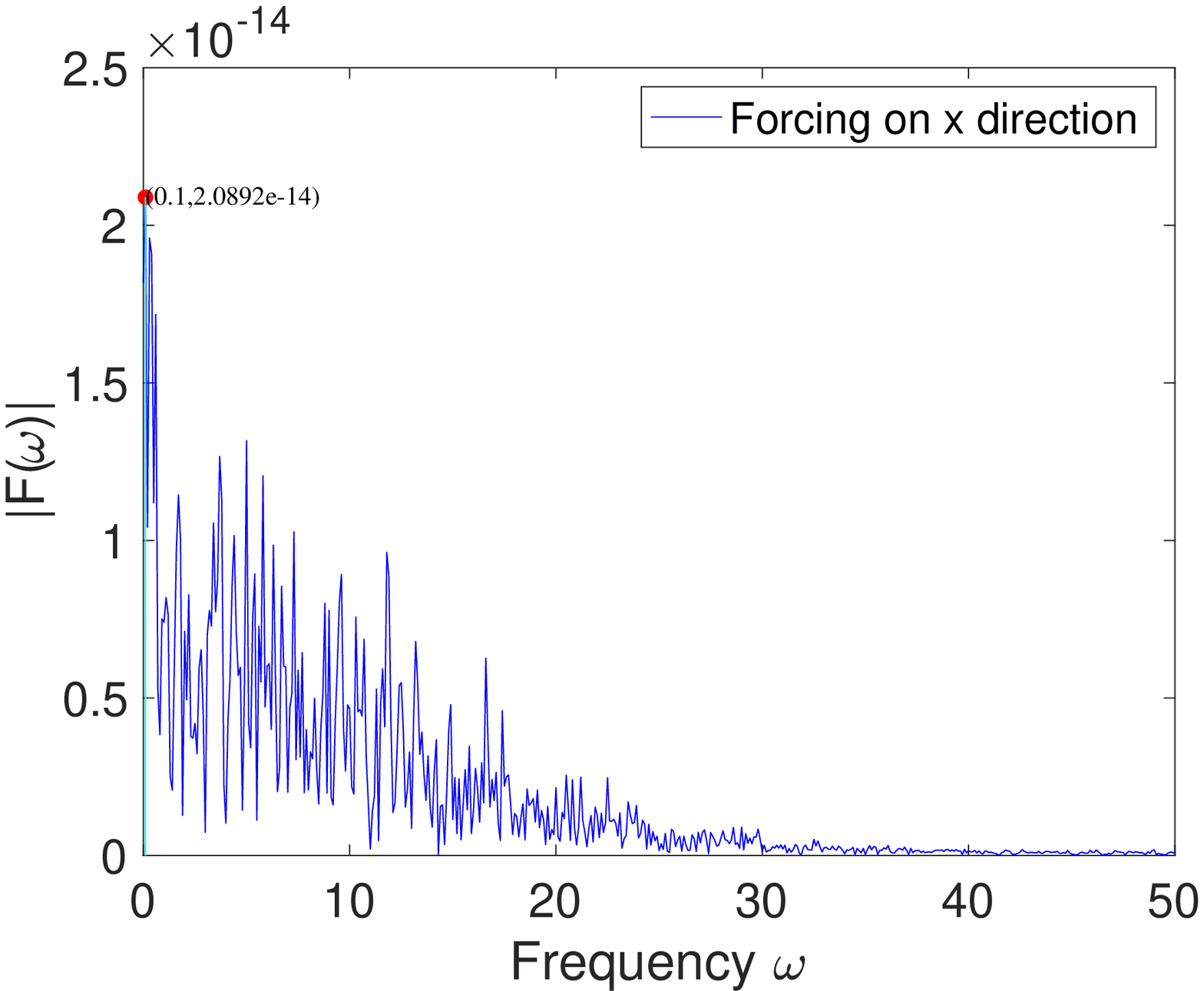} \\
\includegraphics[width=\textwidth]{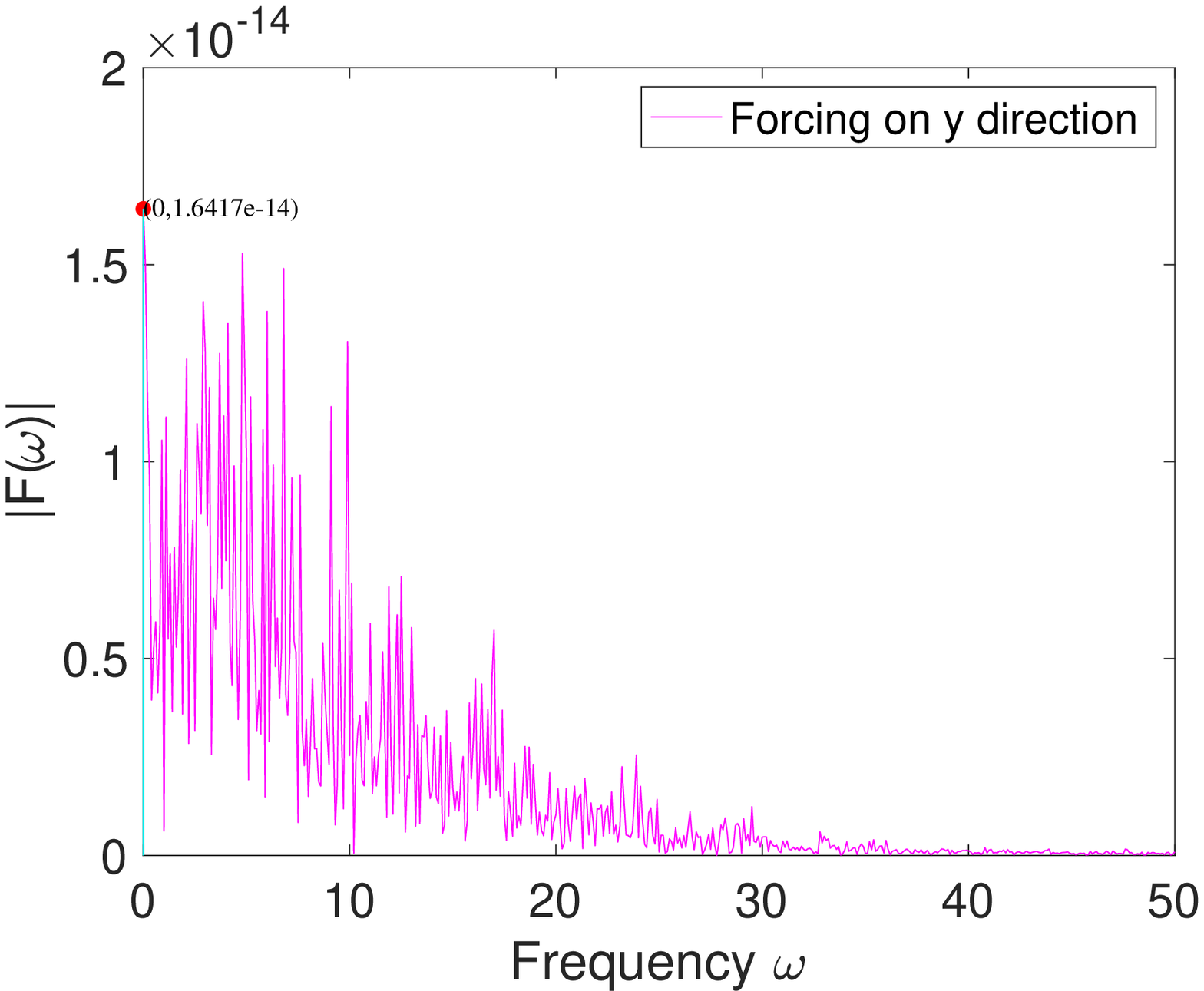}\\
\includegraphics[width=\textwidth]{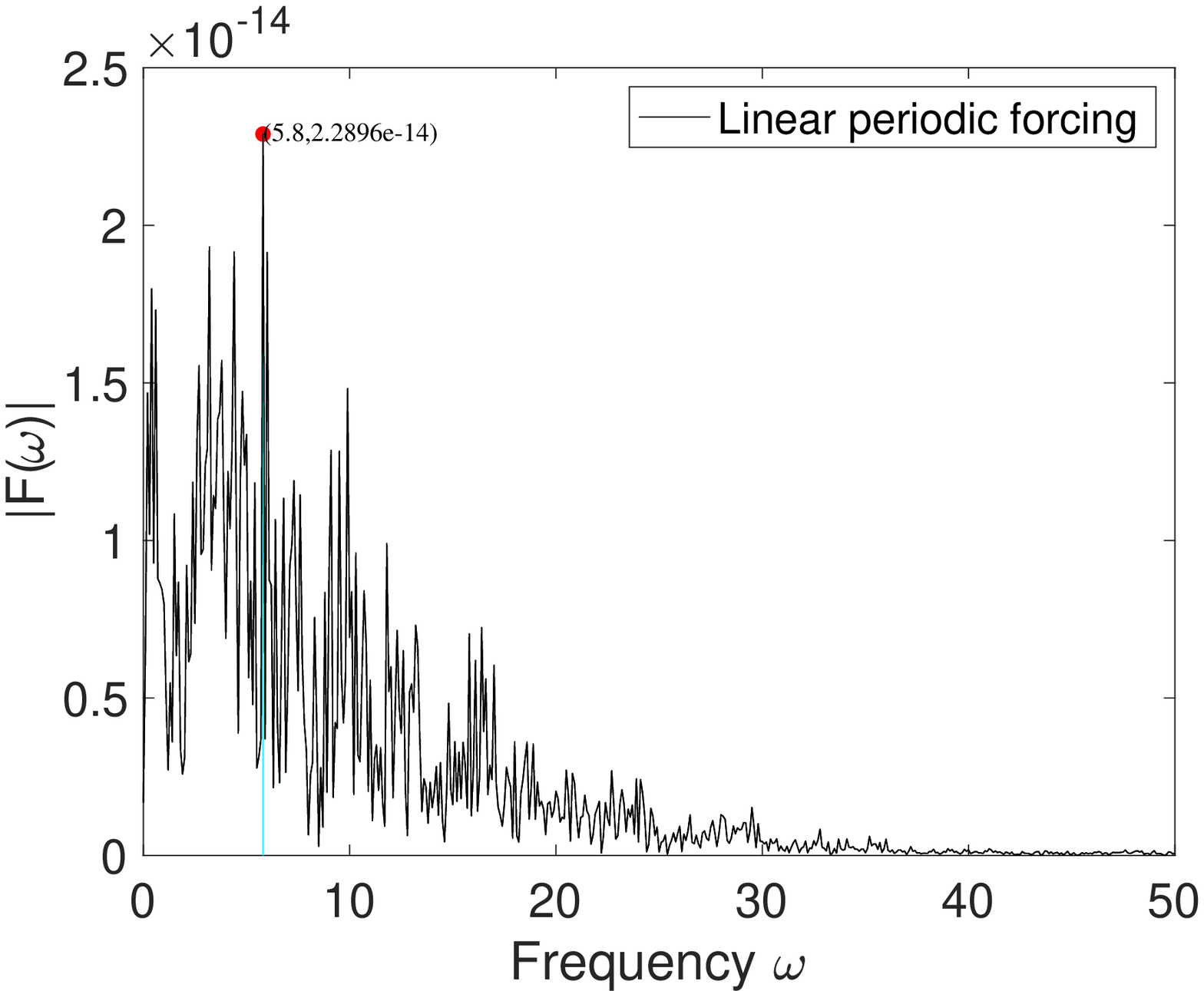}
\end{minipage}
}\label{7a}
\subfigure[]{
\begin{minipage}[b]{0.4\textwidth}
\includegraphics[width=\textwidth]{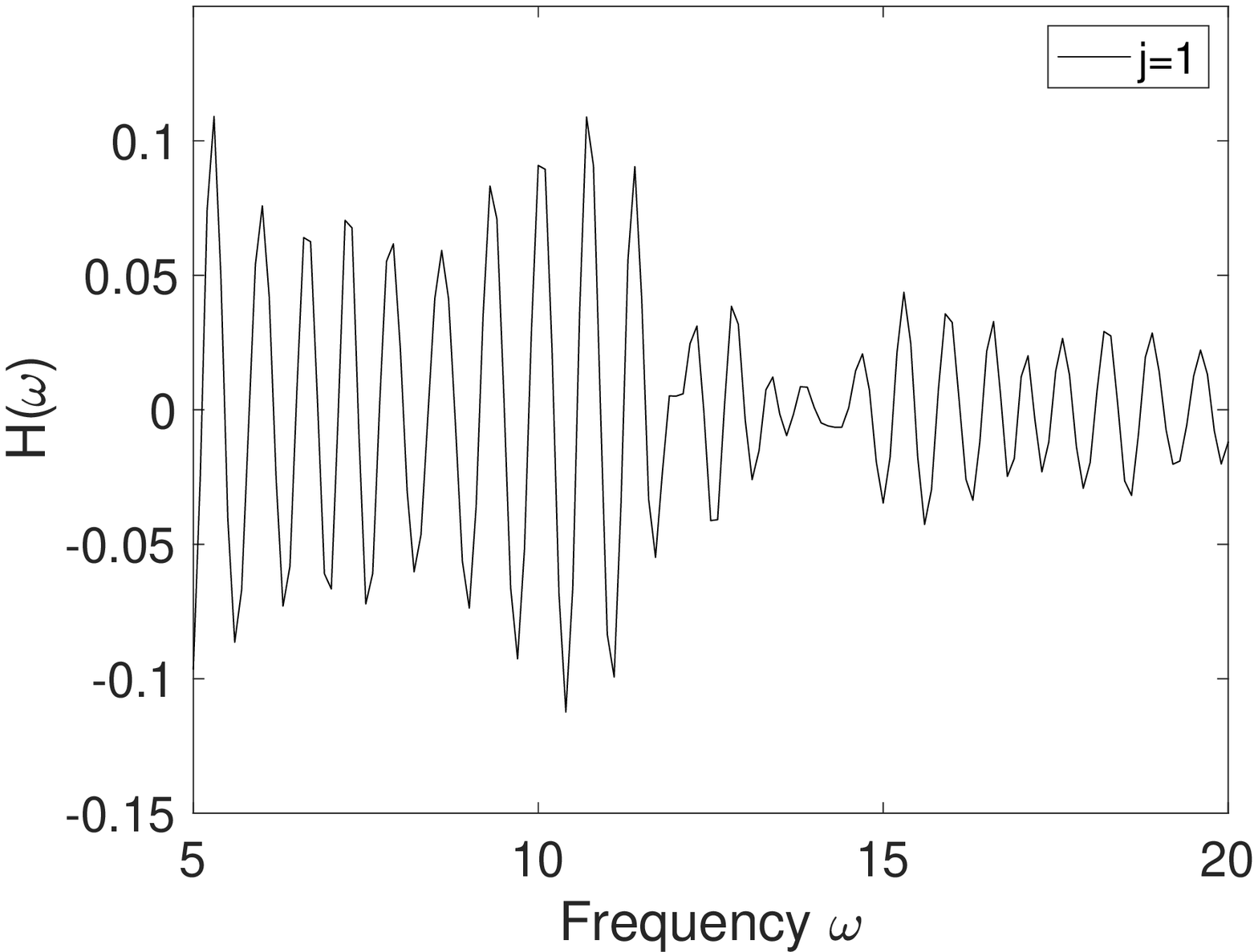} \\
\includegraphics[width=\textwidth]{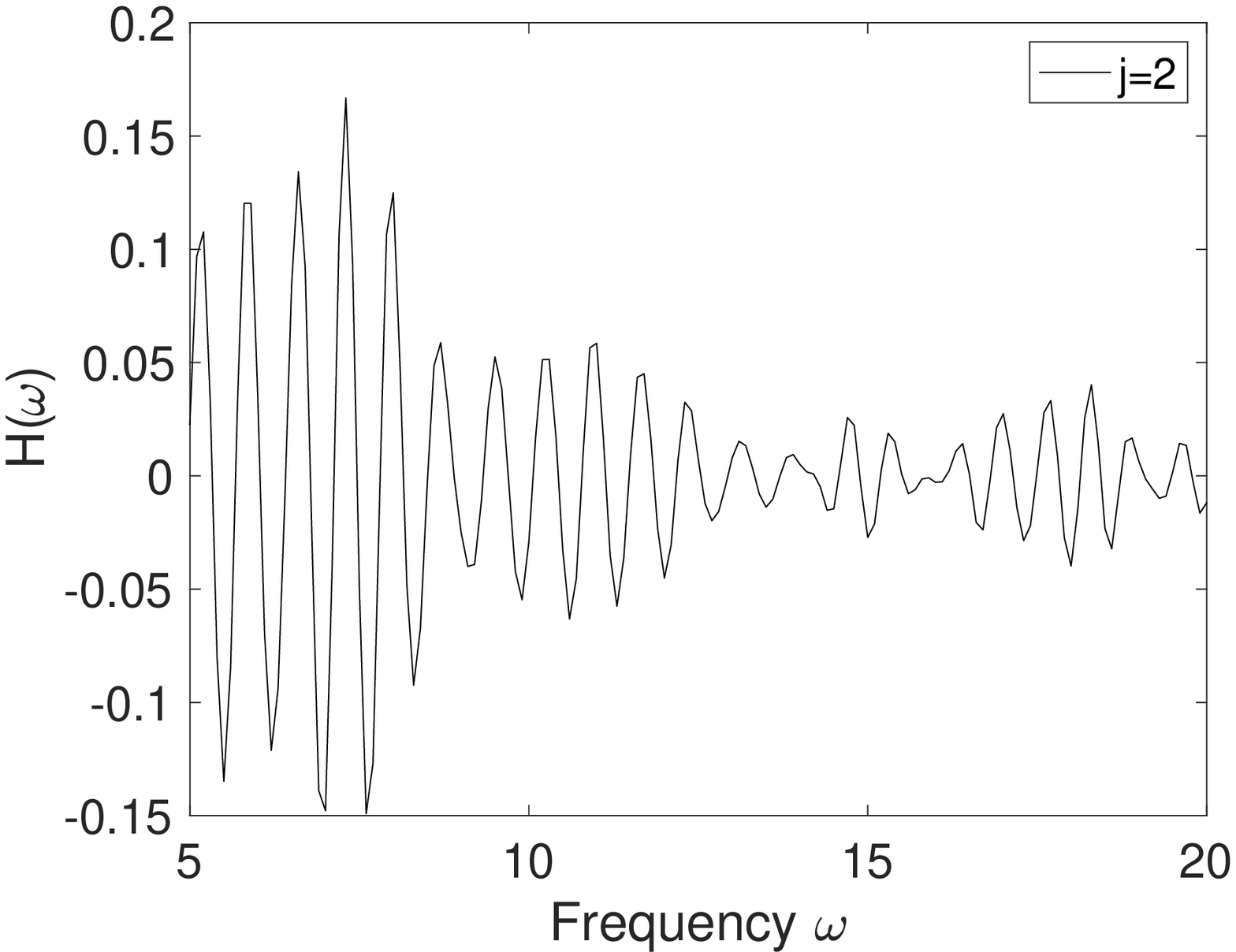}\\
\includegraphics[width=\textwidth]{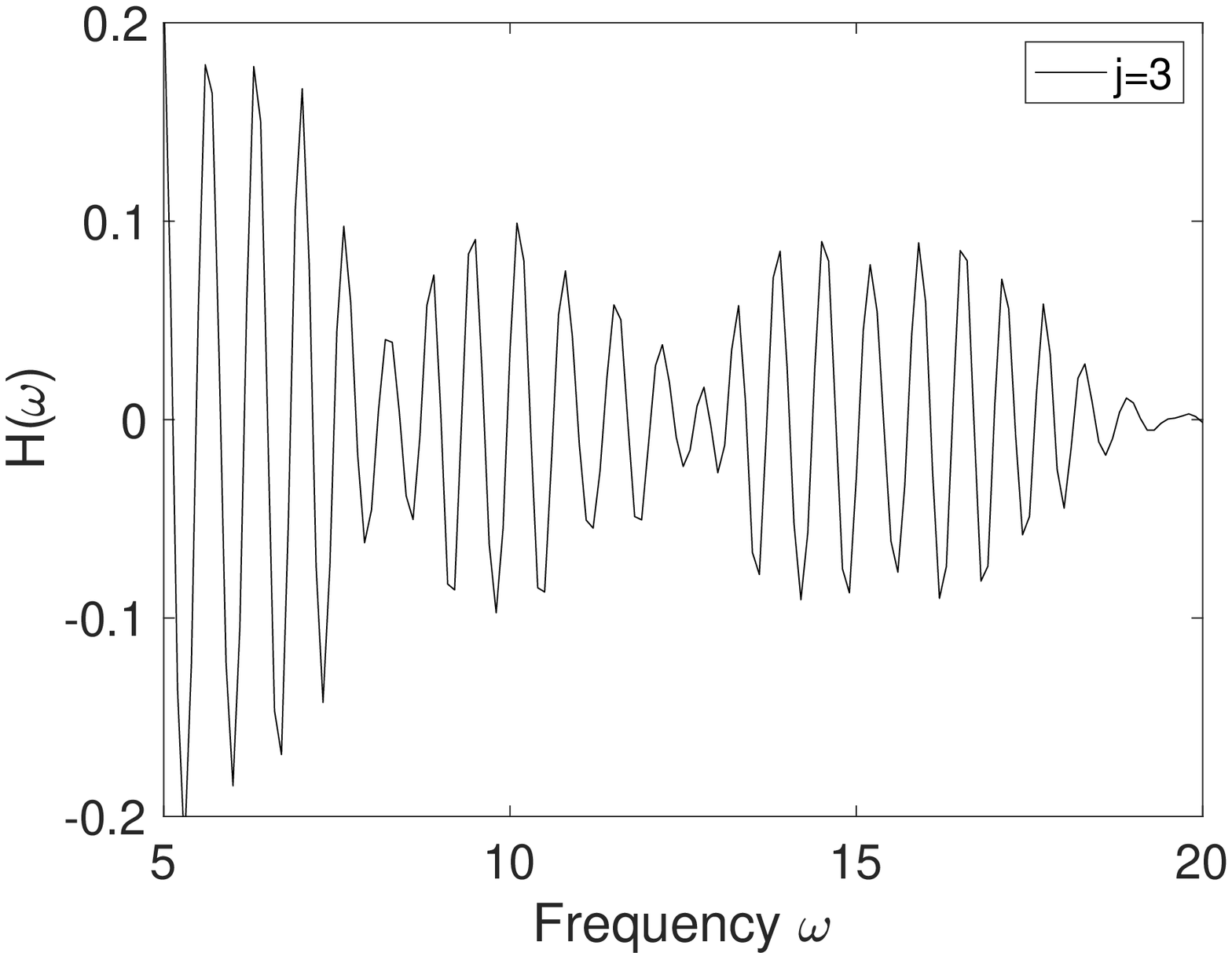}
\end{minipage}
}\label{7b}
\caption{(Color online) Damping $\gamma=1$. (a) $\delta S_e=-2A\epsilon |F(\omega)|$. The dependence of  action correction $|F(\omega)|$ on frequency $\omega$ in three special linear forcing cases, respectively. (b) The dependence of components $Re[\int_{-\infty}^{\infty}\dot{x}^{(j)}(t)e^{i\omega t}dt]$ denoted by $H(\omega)$ on frequency $\omega$, for $j=1,2,3$. Note we zoomed-in the x-axis for improved readability. This is reasonable since we just need to show the cancellation here and the plots need not to be very complete.
}
\end{figure}


\section{Conclusion}
\label{sec:conclusions}
In this work, we derived a closed-form explicit expression that characterizes how a small, generic nonlinear periodic forcing affects the metastable transition rate in kinetic Langevin systems of arbitrary dimensions. This is done by viewing the high-order Euler-Lagrange equations associated with the Freidlin-Wentzell action minimization in the perspective of perturbed Hamiltonian dynamics. Perturbation analysis allows the MLP and its rate to be approximated from the heteroclinic connection in the unperturbed, noiseless system. Furthermore, we showed that parametric periodic perturbation facilitates metastable transitions by theoretically characterizing the resonant frequency of parametric excitation via stationary phase asymptotics. Numerical experiments for both  low-dimensional toy models and a 144-dimensional molecular cluster validated our theory. The method we developed here could offer insights to the interaction between periodic force and noise in rather general systems.


\appendix
\section{Euler-Lagrangian Equations}\label{sec:EL}
Consider the variational problem of minimizing the action functional 
$$S[x]=\int_{t_0}^{t_f}L(t, x, \dot{x},\ddot{x})dt$$
over the set of paths $x\in C^1([t_0, t_f], \mathbb{R}^n)$ satisfying the boundary conditions 
$$x(t_0)=x_{t_0},\;\;\; x(t_f)=x_{t_f}.$$
A path $x\in C^1([t_0, t_f], \mathbb{R}^n)$ from $x_{t_0}$ to $x_{t_f}$ is said to be minimal if $S[x]\leq S[x+\xi]$ for every variation $\xi\in C^1([t_0, t_f], \mathbb{R}^n)$ such that $\xi(t_0)=\xi(t_f)=0$, $\dot{\xi}(t_0)=\dot{\xi}(t_f)=0$.
\begin{lemma}\label{ELE}
A minimal path $x$: $[t_0,t_f]\to \mathbb{R}^n$ is a solution to the Euler-Lagrange equations 
\begin{equation}\label{eq:aa}
\frac{\partial\mathcal{L}}{\partial x}-\frac{d}{dt}\frac{\partial\mathcal{L}}{\partial\dot{x}}+\frac{d^2}{dt^2}\frac{\partial\mathcal{L}}{\partial\ddot{x}}=0.
\end{equation}
\end{lemma}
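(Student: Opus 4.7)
}
The plan is to apply the standard calculus-of-variations argument, adapted to a second-derivative Lagrangian. Given a minimal path $x$, I would introduce a one-parameter family of perturbations $x_\epsilon = x + \epsilon \xi$, where $\xi \in C^1([t_0,t_f],\mathbb{R}^n)$ satisfies $\xi(t_0)=\xi(t_f)=0$ and $\dot\xi(t_0)=\dot\xi(t_f)=0$, so that every $x_\epsilon$ remains admissible. Since $x$ is minimal, the real-valued function $\phi(\epsilon) := S[x+\epsilon\xi]$ attains its minimum at $\epsilon=0$, and I would invoke the first-order necessary condition $\phi'(0)=0$.

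Next I would compute $\phi'(0)$ by differentiating under the integral sign and using the chain rule:
\begin{equation*}
\phi'(0) = \int_{t_0}^{t_f}\left[\frac{\partial \mathcal{L}}{\partial x}\cdot\xi + \frac{\partial \mathcal{L}}{\partial \dot x}\cdot\dot\xi + \frac{\partial \mathcal{L}}{\partial \ddot x}\cdot\ddot\xi\right]dt.
\end{equation*}
The $\dot\xi$ term is handled by a single integration by parts; the resulting boundary term vanishes because $\xi$ vanishes at the endpoints. The $\ddot\xi$ term requires two integrations by parts: the first produces a boundary term involving $\dot\xi$ at the endpoints (which vanishes by hypothesis on $\dot\xi$) plus an integral of $-\frac{d}{dt}\frac{\partial\mathcal{L}}{\partial \ddot x}\cdot\dot\xi$; the second produces a boundary term involving $\xi$ at the endpoints (which vanishes by hypothesis on $\xi$) plus an integral of $\frac{d^2}{dt^2}\frac{\partial\mathcal{L}}{\partial \ddot x}\cdot\xi$. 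Collecting everything yields
\begin{equation*}
0 = \phi'(0) = \int_{t_0}^{t_f}\left[\frac{\partial\mathcal{L}}{\partial x}-\frac{d}{dt}\frac{\partial\mathcal{L}}{\partial\dot{x}}+\frac{d^2}{dt^2}\frac{\partial\mathcal{L}}{\partial\ddot{x}}\right]\cdot\xi\,dt.
\end{equation*}

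Finally, I would invoke the fundamental lemma of the calculus of variations: since the above identity holds for every admissible $\xi$ (in particular, for every smooth compactly supported test function on $(t_0,t_f)$), the bracketed expression must vanish pointwise, which is precisely \eqref{eq:aa}. The only genuine subtlety, and thus the part I would be most careful about, is verifying that the regularity hypothesis $x\in C^1$ is sufficient to justify the two successive integrations by parts — strictly speaking one wants $\frac{\partial\mathcal{L}}{\partial\ddot{x}}$ to be $C^2$ along $x$, which is ensured either by imposing extra smoothness on $\mathcal{L}$ and $x$, or by interpreting the computation in a distributional sense and then appealing to a Du Bois–Reymond style regularity argument. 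For the purposes of the present paper this regularity can be taken as a standing assumption, so no additional work is required beyond the clean IBP computation above.
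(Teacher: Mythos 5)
Your proposal is correct and follows essentially the same approach as the paper's proof: form the one-parameter variation $x+\eta\xi$, set the first variation to zero, integrate by parts twice using the boundary conditions $\xi=\dot\xi=0$ at the endpoints, and invoke the fundamental lemma. Your closing remark about the regularity needed to justify the two integrations by parts (the stated $C^1$ hypothesis is technically too weak) is a valid observation that the paper glosses over, but it does not change the nature of the argument.
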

\begin{proof}
Assume that $x$ is minimal. Thus all directional derivatives of $S$ at $x$ vanish, i.e., 
\begin{align}\label{action-AR}
0=&\frac{d}{d\eta}|_{\eta=0}S(x+\eta \xi)=\frac{d}{d\eta}|_{\eta=0}\int_{t_0}^{t_f}L(t, x+\eta \xi, \dot{x}+\eta \dot{\xi},\ddot{x}+\eta \ddot{\xi})dt\notag\\
&=\int_{t_0}^{t_f}(\sum_{i=1}^{n}\frac{\partial L}{\partial x_i}(t, x, \dot{x},\ddot{x})\xi_i+\sum_{i=1}^{n}\frac{\partial L}{\partial \dot{x}_i}(t, x, \dot{x},\ddot{x})\dot{\xi}_i+\sum_{i=1}^{n}\frac{\partial L}{\partial \ddot{x}_i}(t, x, \dot{x},\ddot{x})\ddot{\xi}_i) dt\notag\\
&=\int_{t_0}^{t_f}(\sum_{i=1}^{n} \frac{\partial L}{\partial x_i}-\frac{d}{dt}\frac{\partial L}{\partial \dot{x}_i}+\frac{d^2}{dt^2}\frac{\partial L}{\partial \ddot{x}_i})\xi_i dt
\end{align}
for all variations  $\xi\in C^1([t_0, t_f], \mathbb{R}^n)$ with $\xi(t_0)=\xi(t_f)=0$, $\dot{\xi}(t_0)=\dot{\xi}(t_f)=0$. Here the last equality is based on integration by parts and the boundary conditions for $\xi$.
\end{proof}

\section{Brief review of the method of stationary phase}\label{AB}The method of stationary phase \cite{tao2004lecture} established integral asymptotics \cref{AS} and proposed to determine the leading-order behavior of the integral
\begin{equation}\label{integralI}
I(\nu)=\int_{a}^{b}f(t)e^{i\nu g(t)}dt    
\end{equation}
for $\nu\gg1$, where functions $f$ and $g$ are smooth enough to admit Taylor approximations near some appropriate point in $[a,b]$, and $g$ is real-valued.\par
We assume that $g'(c)=0$ at some point $c\in(a,b)$, and that $g'(t)\neq0$ everywhere else in the closed interval. Assume further that $g''(c)\neq0$ and $f(c)\neq0$. Let $\sigma$ be the sign of $g''(c)$. Then 
$$\sigma g''(c)=|g''(c)|.$$
Thus, to leading order,
\begin{equation}\label{SPA}
 I(\nu)\sim f(c)e^{i\nu g(c)}\sqrt{\frac{2\pi}{\nu|g''(c)|}}e^{\frac{\pi i \sigma}{4}}, \;as\;\;\nu\to\infty.   
\end{equation}
The symbol `$\sim$' is used to mean that the right-hand side is the first term in an asymptotic
expansion of the left-hand side.
Equation \cref{SPA} is called the stationary phase approximation, due to the fact that the main contribution to the integral comes form a region of a point $c$ at which the phase $g(t)$ is stationary. For more details on derivation of \cref{SPA}, see \cite{tao2004lecture}.
\section*{Acknowledgments}
The authors would like to thank Professor Jinqiao Duan, Dr Pingyuan Wei and Dr Yang Li for helpful discussions. This work was primarily done while YC was a visiting scholar at Georgia Institute of Technology. The research of YC was supported by the NSFC grant 12101484. MT is grateful for partial support by NSF DMS-1847802.

\bibliographystyle{siamplain}
\bibliography{yingreferences}
\end{document}